\numberwithin{equation}{section}
\newcommand{\K}{\mathbb{K}}
\newcommand{\PP}{\mathbb{P}}
\newcommand{\TT}{\mathbb{T}}
\newcommand{\cF}{{\mathcal F}}
\newcommand{\cI}{{\mathcal I}}
\newcommand{\cL}{{\mathcal L}}
\newcommand{\cO}{{\mathcal O}}
\newcommand{\cS}{{\mathcal S}}
\newcommand{\cT}{{\mathcal T}}
\newcommand{\cU}{{\mathcal U}}
\newcommand{\cZ}{{\mathcal Z}}
\newcommand{\tV}{\tilde{V}}
\newcommand{\tE}{\tilde{E}}
\newcommand{\tomega}{{\widetilde{\omega}}}
\newcommand{\tpi}{{\widetilde{\pi}}}
\newcommand{\Sym}{\mathrm{Sym}}
\newcommand{\End}{\mathrm{End}\,}
\newcommand{\Ker}{\mathrm{Ker}}
\newcommand{\Image}{\mathrm{Im}\,}
\newcommand{\Iden}{\mathrm{Id}}
\newcommand{\Hom}{\mathrm{Hom}}
\newcommand{\rank}{\mathrm{rk}\,}
\newcommand{\Pic}{\mathrm{Pic}}
\newcommand{\ev}{\mathrm{ev}}
\newcommand{\Kx}{K_{X}}
\newcommand{\Ox}{{{\cO}_{X}}}
\newcommand{\sRat}{\underline{\mathrm{Rat}}\,}
\newcommand{\sPrin}{\mathrm{\underline{Prin}}\,}
\newcommand{\Span}{\mathrm{Span}}
\newcommand{\Gr}{\mathrm{Gr}}
\newcommand{\Quot}{\mathrm{Quot}}
\newcommand{\Hilb}{\mathrm{Hilb}}
\newcommand{\Supp}{\mathrm{Supp}}
\newcommand{\Sec}{\mathrm{Sec}}
\newcommand{\isom}{\xrightarrow{\sim}}
\newcommand{\length}{\mathrm{length}\,}
\newcommand{\Opv}{\cO_{\PP V}}
\newcommand{\Oz}{\cO_{Z}}
\newcommand{\Opvo}{\Opv(1)}
\newcommand{\Opfo}{\cO_{\PP F}(1)}
\newcommand{\defe}{\mathrm{def}}
\newcommand{\tcup}{\tilde{\cup}}
\newcommand{\urd}{U_X (r, d)}
\newcommand{\ure}{U_X (r, e)}
\newtheorem{theorem}{{\textbf Theorem}}[section]
\newtheorem{proposition}[theorem]{{\textbf Proposition}}
\newtheorem{corollary}[theorem]{{\textbf Corollary}}
\newtheorem{lemma}[theorem]{{\textbf Lemma}}
\newtheorem{criterion}[theorem]{{\textbf Criterion}}
\newtheorem{defn}[theorem]{{\textbf Definition}}
\newtheorem{remit}[theorem]{{\textbf Remark}}
\newenvironment{remark}{\begin{remit}\rm}{\end{remit}}
\newenvironment{definition}{\begin{defn}\rm}{\end{defn}}
\newcommand{\fix}[1]{{{\footnotesize{\textcolor{red}{Fix: #1}}}}}
\title{A Riemann--Kempf singularity theorem for higher rank Brill--Noether loci}
\author{George H. Hitching}
\email{george.hitching@hioa.no}
\address{H\o gskolen i Oslo og Akershus, Postboks 4, St. Olavs plass, 0130 Oslo, Norway.}
\begin{document}

\begin{abstract} Given a vector bundle $V$ over a curve $X$, we define and study a surjective rational map $\Hilb^d (\PP V ) \dashrightarrow \Quot^{0, d} ( V^* )$ generalising the natural map $\Sym^d X \to \Quot^{0, d} (\Ox)$. We then give a generalisation of the geometric Riemann--Roch theorem to vector bundles of higher rank over $X$. We use this to give a geometric description of the tangent cone to the Brill--Noether locus $B^r_{r, d}$ at a suitable bundle $E$ with $h^0 (E) = r+k$. This gives a generalisation of the Riemann--Kempf singularity theorem. As a corollary, we show that the $k$th secant variety of the rank one locus of $\PP \End E$ is contained in the tangent cone. \end{abstract}

\maketitle

\section{Introduction}

Let $X$ be a projective smooth curve of genus $g \ge 2$ and $D$ an effective divisor of degree $d$ on $X$. By the geometric Riemann--Roch theorem, $\dim |D|$ is exactly the defect of $D$ on the canonical model of $X$ in $| \Kx |^*$. If $X$ is general, the line bundle $\Ox (D)$ defines a point of multiplicity $h^0 ( X, \Ox (D))$ of the Brill--Noether locus $W_d (X)$. The Riemann--Kempf singularity theorem (see \cite[Chapter 2]{GrHa}) states that the tangent cone to $W_d (X)$ at $\Ox (D)$ is precisely the union of the secants $\Span (D') \subset |\Kx|^*$ for $D' \in |D|$. Using this picture, in \cite{KS}, \cite{CS} and \cite{HM} new proofs of Torelli's theorem were given using the infinitesimal geometry of Brill--Noether loci.

In recent years, higher-rank analogues of $W_d (X)$ have been the subject of much attention. We denote by $\urd$ the moduli space of stable vector bundles of rank $r$ and degree $d$ and consider the \textsl{higher-rank Brill--Noether locus}
\[ B^k_{r,d} \ = \ \{ E \in \urd: h^0 ( X, E) \ge k \} . \]
See \cite{GT} for a summary of relevant results. Our interest is primarily in the infinitesimal geometry of $B^k_{r,d}$ at singular points. As motivation, we note that in \cite{Pau} and \cite{HH} the infinitesimal geometry of generalised theta divisors associated to higher rank vector bundles (examples of \emph{twisted Brill--Noether loci}) was used to prove ``Torelli-type'' theorems (recovering the curve and the bundle respectively). It seems therefore natural to investigate what can be recovered from the tangent cones $\TT_E B^k_{r,d}$ at singular points $E$. Note that \cite{CT} gives a comprehensive introduction and many interesting results on the singular loci of higher rank Brill--Noether loci and twisted Brill--Noether loci.

The projectivised tangent space of $\urd$ at $E$ is $\PP H^1 ( X, \End E )$. It was shown in \cite{HR} that there is a natural map $\psi \colon \PP \End E \dashrightarrow \PP H^1 ( X, \End E )$, generalising the canonical curve, which is an embedding for general $E$. In light of this, as a first step towards finding analogues of the above results on line bundles, one can seek generalisations of the geometric Riemann--Roch and Riemann--Kempf theorems for bundles of higher rank, given in terms of the geometry of the scroll $\PP \End E$. One such generalisation was given in \cite{Hit7} for bundles of Euler characteristic zero, where the tangent cones of 
\[ B^1_{r, r(g-1)} \ = \ \{ E \in U_X (r, r(g-1)) : h^0 (X, E) \ge 1 \} \ \subset \ U_X (r, r(g-1)) \]
another ``generalised theta divisor'', are described geometrically.

In the present work, we generalise the picture in another way. Returning to the opening example, we note that the sequence $0 \to \Ox \to \Ox (D) \to \Ox(D)_D \to 0$ realises $\Ox (D)$ as an \textsl{elementary transformation} of the trivial bundle. Let $V$ be a vector bundle of rank $r$ and $\pi \colon \PP V \to X$ the associated scroll. We consider elementary transformations $0 \to V \to \tV \to \tau \to 0$; that is, bundles $\tV$ of rank $r$ containing $V$ as a locally free subsheaf of full rank. If the support of $\tau$ is reduced and of degree $d$, then the choice of $\tV$ is canonically equivalent to a choice of $d$ points $\nu_1 , \ldots , \nu_d$ of $\PP V$ belonging respectively to fibres over distinct points $x_1 , \ldots x_d$ of $X$. Generalising the definition of $\Ox (D)$, one can realise the sheaf $\tV$ as the sheaf of rational sections of $V$ with poles bounded by the $x_i$ and in the direction corresponding to the $\nu_i$. Then $\tV^*$ is the subsheaf of $V^*$ of sections whose values at $x_i$ belong to the hyperplane determined by $\nu_i$.

Our first goal is to systematise and extend this construction to the case where $\Supp (\tau)$ may be nonreduced. Let $Z$ be a subscheme of $\PP V$ of dimension zero and length $d$. Generalising the operation $D \mapsto \Ox (D)$, we define
\[ V_Z \ := \ \left( \pi_* \left( \cI_Z \otimes \Opvo \right) \right)^* . \]
The association $Z \mapsto V_Z^*$ gives a map $\alpha \colon \Hilb^d (\PP V) \dashrightarrow \Quot^{0, d}(V^*)$, generalising the natural map $\Sym^d X \to \Quot^{0, d}(\Ox)$ given by $D \mapsto \Ox(-D)$. In Theorem \ref{Zexists} and Theorem \ref{surj} we show that the restriction of $\alpha$ to the component of $\Hilb^d (\PP V )$ containing reduced subschemes is surjective and generically injective.

In contrast to the line bundle case, if $r \ge 2$ then $\deg ( V_Z )$ may be strictly less than $\deg V + d$ in special cases. This leads us to the notion of \textsl{$\pi$-nondefectivity} (Definition \ref{defPiNondef}). Much can be said about $\pi$-nondefective subschemes, but we limit ourselves here to what is strictly necessary for the present applications.

In {\S} \ref{grr}, we link the elementary transformations $V_Z$ with the geometry of an image of the scroll $\PP V \dashrightarrow \PP H^1 ( X, V )$. This allows us to prove a generalisation of the geometric Riemann--Roch theorem for scrolls (Theorem \ref{ggrr}). For applications to Brill--Noether loci, it is necessary (and straightforward) to have also a ``relative'' version of this result, given in terms of the geometry of the rank one locus of $V \otimes E$ (Theorem \ref{relggrr}). It should be noted that a similar situation is studied in the recent paper \cite{Br17}; see Remark \ref{Brivio} for discussion. 

In {\S} \ref{TangentConeBN}, we use Theorem \ref{relggrr} together with results in \cite{CT} to give another generalisation of the Riemann--Kempf singularity theorem. Suppose $E$ is a stable, generically generated bundle with $h^0 (X, E ) = r + n$ which is Petri $r$-injective (Definition \ref{PetriRinj}). Then for generic $\Lambda \in \Gr (r, H^0 ( X, E) )$, the bundle $E$ is an elementary transformation of $\Ox^{\oplus r}$. By the results of the previous sections, there exists $Z_\Lambda \in \Hilb^d ( \PP E^* )$ such that $\Ox^{\oplus r} = ( E^* )_{Z_\Lambda}$. This observation leads naturally to a geometric description of the projectivised tangent cone $\TT_E B^r_{r,d}$ in terms of the geometry of the rank one locus $\Delta \subseteq \PP \End E \subset \PP H^1 (X, \End E)$ via the map $\psi \colon \PP \End (E) \dashrightarrow \PP H^1 (X, \End (E))$ mentioned above. The precise statement, which generalises the Riemann--Kempf theorem, is given in Theorem \ref{genRKS}. An interesting corollary is that if $h^0 (X, E) = r+n$, then $\TT_E B^r_{r,d}$ contains the $n$th secant variety of $\Delta$ (Theorem \ref{secant}). This generalises the fact that the tangent cone to the Riemann theta divisor at a point of multiplicity $k+1$ contains the $k$th secant variety of the canonical curve.

We hope that these results may be useful in proving new ``Torelli-type'' statements.

\subsection*{Acknowledgements.} I thank Insong Choe and Michael Hoff for interesting and enjoyable discussions and for many valuable comments on this work.

\subsection*{Notation} We work over an algebraically closed field $\K$ of characteristic zero. If $Y$ is a scheme and $Z \subset Y$ a closed subscheme, we write $\cI_Z$ for the ideal sheaf of $Z$ in $\cO_Y$.

\section{Elementary transformations and finite subschemes of scrolls} \label{eltranssubschemes}

Let $V$ be a vector bundle of rank $r$ over a projective smooth curve $X$, and denote by $\pi \colon \PP V \to X$ the corresponding scroll. The latter is naturally isomorphic to the Quot scheme $\Quot^{0, 1} ( \PP V^* )$ parametrising elementary transformations of the form $0 \to \tV^* \to V^* \to \tau \to 0$, where $\tau$ is a skyscraper sheaf of length 1; equivalently, elementary transformations of the form $0 \to V \to \tV \to \tau \to 0$. More generally, in \cite{Ty} a tower of projective bundles was constructed parametrising such $\tV$ for $\tau$ of degree $d \ge 1$ (see also \cite[Lemma 4.2]{CH1}). Here we study an alternative way of parametrising these elementary transformations. Our approach has some features in common with that of \cite{BGL}, whose \emph{$(r, n)$-divisors} are $\Ox^{\oplus r}$-valued principal parts in the sense below.

Let $Z \subset \PP V$ be a subscheme of dimension zero and length $d$, corresponding to a point of the Hilbert scheme $\Hilb^d ( \PP V )$. Tensoring the sequence $0 \to \cI_Z \to \Opv \to \Oz \to 0$ by $\Opvo$ and taking direct images, we obtain an exact sequence
\begin{equation} 0 \to \pi_* \left(  \cI_Z \otimes \Opvo \right) \to V^* \to \pi_* \left( \Oz \otimes \Opvo \right) \to \cdots \label{defineVZ} \end{equation} 
Set $V_Z := \left( \pi_* \left( \Opvo \otimes \cI_Z \right) \right)^*$. Then $0 \to V \to V_Z \to V_Z / V \to 0$ is an elementary transformation. We write $\tau_Z := V_Z / V$. 

\begin{remark} \label{QuotTerminology} We will switch freely between elementary transformations of the form $V \subset \tV$ and $\tV^* \subset V^*$, depending on what is more convenient at a given time. It will be necessary to distinguish between the statement that $V_Z \cong \tV$ as vector bundles and the stronger statement that $V_Z^* \subset V^*$ and $\tV^* \subset V^*$ define the same point of the Quot scheme $\Quot^{0, d}(V^*)$. To this end, we will occasionally abuse language and speak of an elementary transformation of the form $0 \to V \to \tV \to \tau \to 0$ as ``an element of $\Quot^{0, d} (V^*)$''. \end{remark}

If $r = 1$ then $Z$ is a divisor of degree $d$ on $\PP V = X$, and then $V_Z^* = V^* \otimes \Ox (-Z)$ as points of $\Quot^{0, d} ( V^* )$. 
 For $r \ge 2$ we only have the inequality $\deg ( V_Z ) \le \deg ( V ) + d$, which may be strict. This will be discussed further in {\S} \ref{PiNondefectivity}. For now, let us give the main result of the present section.

\begin{theorem} \label{Zexists} Let $0 \to V \to \tV \to \tau \to 0$ be an elementary transformation where $\tau$ has degree $d \geq 1$.
\begin{enumerate}
\item[(a)] There exists $Z \in \Hilb^d ( \PP V )$ such that $\tV^* = V_Z^*$ as elements of $\Quot^{0, d}(V^*)$. 
\item[(b)] If $\tau$ has reduced support on $X$, then $Z$ is reduced and uniquely determined.
\end{enumerate} \end{theorem}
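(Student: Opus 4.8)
The plan is to reduce both parts to a statement over a discrete valuation ring, and then, for (a), to write $Z$ down explicitly using elementary divisors. By Remark \ref{QuotTerminology}, the conclusion of (a) amounts to the equality of subsheaves $\pi_*(\cI_Z \otimes \Opvo) = \tV^*$ inside $V^*$; this may be tested stalkwise on $X$, and the formation of $\pi_*(\cI_Z \otimes \Opvo)$ is compatible with restricting $X$ to an open set and with localising $V$. Since the elementary transformation is trivial over $X \setminus \Supp(\tau)$, one may treat the points of $\Supp(\tau)$ one at a time; so assume $X = \mathrm{Spec}\,R$ with $R$ a discrete valuation ring with closed point $x$ and uniformiser $t$, that $\Supp(\tau) = \{x\}$, and that $V$ (hence $V^*$) is free. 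Dualising $0 \to V \to \tV \to \tau \to 0$, the data become a full-rank submodule $\tV^* \subseteq V^* \cong R^{\oplus r}$ with $\length(V^*/\tV^*) = d$, and the task is to realise it as $\pi_*(\cI_Z \otimes \Opvo)$ for some zero-dimensional $Z \subset \PP V$ of length $d$.

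For (a), by the elementary divisor theorem I may choose a basis $\epsilon_1, \dots, \epsilon_r$ of $V^*$ and integers $a_1, \dots, a_r \ge 0$ with $\sum_i a_i = d$ such that $\tV^* = \bigoplus_i t^{a_i} R\,\epsilon_i$. Let $[Y_1 : \dots : Y_r]$ be the homogeneous fibre coordinates on $\PP V$ with $Y_i$ corresponding to $\epsilon_i$ under $\pi_* \Opvo = V^*$, and for each $i$ with $a_i > 0$ let $\nu_i = [0 : \dots : 1 : \dots : 0] \in \PP V_x$ (the $1$ in slot $i$). I would take $Z := \bigsqcup_{i : a_i > 0} Z_i$, where $Z_i \subset \{Y_i \neq 0\}$ is the closed subscheme with ideal $(Y_j / Y_i : j \neq i) + (t^{a_i})$ — that is, $\nu_i$ thickened to order $a_i$ in the direction of $X$ along the constant section $\{Y_j = 0 \ (j \neq i)\}$. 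The $\nu_i$ lie on distinct coordinate hyperplanes, hence are distinct points, so $Z$ is zero-dimensional of length $\sum_i a_i = d$ and defines a point of $\Hilb^d(\PP V)$ (lying in the component of reduced subschemes, each $Z_i$ being curvilinear). It then remains to check $\pi_*(\cI_Z \otimes \Opvo) = \tV^*$, which is a chart computation: the opens $\{Y_i \neq 0\}$ (for $a_i > 0$) together with $\PP V \setminus \Supp(Z)$ cover $\PP V$, one has $Z \cap \{Y_i \neq 0\} = Z_i$, and a section $s = \sum_l c_l \epsilon_l$ of $V^*$ (with $c_l \in R$) restricts on $\{Y_i \neq 0\}$ to $c_i + \sum_{l \neq i} c_l (Y_l/Y_i)$ times the frame $Y_i$, which lies in the ideal of $Z_i$ iff $c_i \in t^{a_i} R$. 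Hence $s \in \pi_*(\cI_Z \otimes \Opvo)$ iff $c_i \in t^{a_i} R$ for every $i$, i.e. iff $s \in \tV^*$; and by construction this identity holds in $\Quot^{0,d}(V^*)$, not merely abstractly. This proves (a).

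For (b), ``$\tau$ has reduced support'' means $\tau \cong \bigoplus_{j=1}^d \K(x_j)$ with the $x_j$ pairwise distinct. Then $\tau' := V^*/\tV^*$ has one-dimensional stalk at each $x_j$, and near $x_j$ the subsheaf $\tV^*$ consists of the sections of $V^*$ whose value at $x_j$ lies in the hyperplane $H_j := \ker(V^*|_{x_j} \to \tau'_{x_j})$, which determines a point $\nu_j \in \PP V_{x_j}$. Existence is the special case $a_i \in \{0,1\}$ of (a) (or directly: for $Z := \{\nu_1, \dots, \nu_d\}$ one checks stalkwise that $\pi_*(\cI_Z \otimes \Opvo)$ is exactly this $\tV^*$). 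For uniqueness, let $Z' \in \Hilb^d(\PP V)$ with $V_{Z'}^* = \tV^*$. The exact sequence \eqref{defineVZ} embeds $\tau'$ into $\pi_*(\cO_{Z'} \otimes \Opvo)$, which has length $\length(Z') = d$; localising at $x_j$ gives $1 = \length(\tau'_{x_j}) \le \length(Z' \cap \pi^{-1}(x_j))$. Since there are $d$ points $x_j$ and $\length(Z') = d$, each of these inequalities is an equality and $Z'$ is supported over $\{x_1, \dots, x_d\}$ with exactly one length-one — hence reduced — point $\nu'_j$ over each $x_j$. Then $\tV^* = V_{Z'}^*$ is again, near $x_j$, the sections of $V^*$ with value at $x_j$ in $H_{\nu'_j}$, forcing $H_{\nu'_j} = H_j$ and so $\nu'_j = \nu_j$. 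Hence $Z' = Z$.

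The step I expect to be the main obstacle is getting the construction in (a) right, not the verification. One should resist defining $Z$ as the zero scheme of the image of $\pi^* \tV^*$ in $\Opvo$: in degenerate situations — say $\tV = t^{-1} V$ locally, where all $a_i = 1$ — that image is $\pi^*(\cO_X(-x)) \otimes \Opvo$, whose zero scheme is the \emph{whole} fibre $\PP V_x$, which is one-dimensional and not of length $d$. The elementary divisor decomposition is exactly what is needed to split $Z$ into up to $r$ distinct points of each fibre of $\pi$ and keep it finite of the right length; granting it, the rest of (a) is the routine chart calculation above, and (b) follows once one has the length inequality $\length(\tau'_{x_j}) \le \length(Z' \cap \pi^{-1}(x_j))$ from \eqref{defineVZ} — the local shadow of the inequality $\deg V_Z \le \deg V + d$ — which forces a single reduced point in each fibre.
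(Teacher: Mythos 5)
Your proof is correct and follows essentially the same route as the paper: the appeal to the elementary divisor theorem over the local ring at $x$ produces exactly the diagonal form $\tV^* = \bigoplus_i t^{a_i}R\,\epsilon_i$ that the paper derives by hand via Proposition \ref{goodframe} and Corollary \ref{descrVtildedual}, and your subschemes $Z_i$ with ideal $(Y_j/Y_i : j\ne i) + (t^{a_i})$ together with the ensuing chart computation coincide with the paper's $Z_j$ and its verification that $\pi_*(\cI_Z\otimes\Opvo)=\tV^*$. The one place you go beyond the paper is part (b), where the length count over each $x_j$ rules out \emph{every} competing $Z'\in\Hilb^d(\PP V)$, not only those arising from the construction, which is the stronger uniqueness the statement actually asserts.
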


The proof of this theorem and its refinement Theorem \ref{surj} will occupy the remainder of the section. As there are several ingredients, let us first give an overview. Firstly, note that (b) is almost obvious: In this case, $\tV$ is determined by specifying a line $\Ker \left( V|_x \to \tV|_x \right)$ for each $x \in \Supp ( \tau )$, so we obtain naturally $d$ points of $\PP V$.

If $\tau$ has nonreduced support, then it emerges that a certain choice of $\K$-basis of $H^0 ( X, \tau )$ determines a scheme $Z$ such that $\tV = V_Z$ as elements of $\Quot^{0, d} ( V^* )$. 
 As in the proof of \cite[Theorem 3.1]{Hit7}, we view sections of $V^* \to X$ as sections of $\Opvo \to \PP V$, and show how certain linear conditions defined by the chosen basis elements of $H^0 (X, \tau )$ determine a suitable subscheme $Z$. 

Now to the details. Firstly, let us describe $\tV^*$ more explicitly. We recall that any locally free sheaf $V$ on $X$ has the flasque resolution
\[ 0 \to V \to \sRat(V) \to \sPrin(V) \to 0, \] 
where $\sRat(V)$ is the sheaf of rational sections of $V$, and $\sPrin(V) = \sRat(V)/V$ the sheaf of $V$-valued principal parts\footnote{In the literature there is an ambiguity in terminology: The \textsl{jet sheaf} parametrising germs of local sections of $V$ to order $k$ is sometimes called the ``sheaf of $k$th order principal parts of $V$''. This is a different object from our $\sPrin(V)$.}. The natural $\Ox$-bilinear pairing
\[ \sRat(V) \times \sRat(V^*) \to \sRat(\Ox) \]
induces a well-defined $\Ox$-linear map
\[ \langle \cdot , \cdot \rangle \colon \sPrin(V) \times V^* \to \sPrin(\Ox). \]
The map $f \mapsto \langle \cdot , f \rangle$ in turn defines an $\Ox$-module homomorphism
\[ V^* \to \Hom_{\Ox} \left( \sPrin (V), \sPrin(\Ox) \right). \]
Restricting to the subsheaf $\tau = \tV / V \subset \sPrin(V) = \sRat(V) / V$, we obtain a map
\begin{equation} V^* \to \Hom_{\Ox}( \tau, \sPrin( \Ox )). \label{pptmap} \end{equation}

\begin{proposition} \label{vdual} The subsheaf $\tV^*$ of $V^*$ is the kernel of (\ref{pptmap}). Equivalently,
\[ \tV^* \ = \ \{ f \in V^* : \hbox{$\langle p, f \rangle$ is zero in $\sPrin(\Ox)$ for all $p \in \tau$} \}. \]
\end{proposition}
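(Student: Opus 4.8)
The plan is to prove the claimed equality of subsheaves of $V^*$ directly, working with rational sections in the spirit of the discussion above. The key point is that for locally free $V$ one has $\sRat(V) = V \otimes_{\Ox} \sRat(\Ox)$, so every $f \in V^* = \Hom_{\Ox}(V, \Ox)$ extends canonically to an $\Ox$-linear map $\hat f \colon \sRat(V) \to \sRat(\Ox)$, namely $f \otimes \mathrm{id}$. This $\hat f$ carries $V$ into $\Ox$, and by construction of the pairing the induced map on quotients $\sPrin(V) = \sRat(V)/V \to \sRat(\Ox)/\Ox = \sPrin(\Ox)$ is precisely $p \mapsto \langle p, f \rangle$.

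First I would observe that, since $\tau$ is torsion and $\Ox$ is torsion-free, applying $\Hom_{\Ox}(-, \Ox)$ to $0 \to V \to \tV \to \tau \to 0$ makes $\tV^* \to V^*$ injective; thus $f \in V^*$ lies in $\tV^*$ exactly when the corresponding morphism $V \to \Ox$ extends to $\tV \to \Ox$. Now regard $\tV$ as a subsheaf of $\sRat(V)$ containing $V$, with $\tV / V = \tau \subseteq \sPrin(V)$, as explained just above. If $g \colon \tV \to \Ox$ restricts to $f$ on $V$, then $g$ and $\hat f|_{\tV}$ are $\Ox$-linear maps $\tV \to \sRat(\Ox)$ agreeing on $V$, so their difference factors through $\tV / V = \tau$; since $\Hom_{\Ox}(\tau, \sRat(\Ox)) = 0$, we get $g = \hat f|_{\tV}$. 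Hence $f$ extends to $\tV$ if and only if $\hat f(\tV) \subseteq \Ox$.

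Finally I would rewrite $\hat f(\tV) \subseteq \Ox$ in terms of the pairing: as $\hat f(V) \subseteq \Ox$ already, this holds if and only if the induced map $\sPrin(V) \to \sPrin(\Ox)$ kills the image $\tau$ of $\tV$, and by the first paragraph that map is $p \mapsto \langle p, f \rangle$. Therefore $f \in \tV^*$ if and only if $\langle p, f \rangle = 0$ in $\sPrin(\Ox)$ for every local section $p$ of $\tau$, which is exactly the assertion that $f$ lies in the kernel of (\ref{pptmap}). I expect the step needing most care to be the compatibility claimed in the first paragraph — that extension of scalars along $V^* \hookrightarrow \sRat(V^*)$ reproduces the pairing $\langle \cdot, f \rangle$, and that an extension of $f$ over $\tV$, should one exist, is forced to equal $\hat f|_{\tV}$ — although both amount to the observation that an $\Ox$-linear map from a torsion-free sheaf into $\sRat(\Ox)$ is determined by its value at the generic point. (Alternatively one could argue homologically, via $\tV^* = \Ker\bigl(V^* \to \mathcal{E}xt^1_{\Ox}(\tau, \Ox)\bigr)$ together with the identification $\mathcal{E}xt^1_{\Ox}(\tau, \Ox) \cong \Hom_{\Ox}(\tau, \sPrin(\Ox))$ furnished by the flasque resolution $0 \to \Ox \to \sRat(\Ox) \to \sPrin(\Ox) \to 0$; the direct approach seems cleaner here.)
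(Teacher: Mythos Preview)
Your argument is correct and follows the same line as the paper's proof: $f \in \tV^*$ if and only if pairing with every local section of $\tV$ lands in $\Ox$, and since $f$ already pairs regularly with $V$, this reduces to the vanishing of $\langle p, f \rangle$ for $p \in \tau$. The paper compresses this into two sentences, taking for granted the points you spell out carefully (the canonical rational extension $\hat f$ and the uniqueness of any extension over $\tV$); your version is the same proof with the details made explicit.
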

\begin{proof} 
A section $f$ of $V^*$ defines a regular section of $\tV^*$ if and only if $\langle \tilde{v} , f \rangle$ is a regular function for all $\tilde{v} \in \tV$. Since $f$ is regular, this is equivalent to saying that the principal part $\langle p, f \rangle$ vanishes for all $p \in \tV / V = \tau$. \end{proof}
\begin{proposition} \label{goodframe} Suppose $x \in \Supp ( \tau )$. Let $z$ be a uniformiser at $x$. Then there exist local sections $v_1, \ldots, v_s$ of $V$ near $x$ which are linearly independent at $x$, and positive integers $k_1, \ldots , k_s$ such that $\tau_x$ is spanned over $\Ox$ by the principal parts
\[ \frac{v_1}{z^{k_1}}, \ \ldots, \ \frac{v_s}{z^{k_s}}. \]
In particular, $s \leq r$. \end{proposition}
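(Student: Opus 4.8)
The statement is local around $x$, so I would pass to stalks and work over the discrete valuation ring $R := \cO_{X,x}$, with uniformiser $z$ and fraction field $K$. Then $V_x$ is free of rank $r$ over $R$, the stalk $\sRat(V)_x$ is $V_x \otimes_R K \cong K^{\oplus r}$, and $\tau_x = \tV_x / V_x$ is a torsion $R$-module of finite length. Since $\tV$ is locally free of rank $r$, its stalk $\tV_x$ is free of rank $r$ over $R$ and contains $V_x$ as a submodule of full rank. (Alternatively, even without assuming $\tV$ locally free, $\tV_x$ is a finitely generated torsion-free submodule of $K^{\oplus r}$ containing $V_x$, hence free of rank exactly $r$.)

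The key step is then the elementary divisor theorem (Smith normal form over the principal ideal domain $R$) applied to the inclusion of free $R$-modules $V_x \subseteq \tV_x$: there exist a basis $f_1, \ldots, f_r$ of $\tV_x$ and integers $0 \le a_1 \le \cdots \le a_r$ such that $z^{a_1} f_1, \ldots, z^{a_r} f_r$ is a basis of $V_x$. Let $s$ be the number of indices $i$ with $a_i \ge 1$; after reindexing we may assume $a_i \ge 1$ for $i \le s$ and $a_i = 0$ for $i > s$, so in particular $s \le r$. Shrinking $X$ to a neighbourhood of $x$ over which these germs are represented by genuine sections, I would put $v_i := z^{a_i} f_i$, a local section of $V$ near $x$, and $k_i := a_i$, for $i = 1, \ldots, s$.

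It then remains to verify the two assertions. First, $v_1, \ldots, v_s$ are $s$ of the $r$ elements of the $R$-basis $z^{a_1} f_1, \ldots, z^{a_r} f_r$ of $V_x$, so their images in $V|_x = V_x / zV_x$ form part of a $\K$-basis and are in particular linearly independent at $x$. Second, as a rational section of $V$ one has $v_i/z^{k_i} = f_i \in \tV_x$, so the principal part of $v_i/z^{k_i}$, namely its class modulo $V$, is $[f_i] \in \tV_x / V_x = \tau_x$; under the identification $\tau_x \cong \bigoplus_{i=1}^r Rf_i / Rz^{a_i}f_i \cong \bigoplus_{i=1}^r R/(z^{a_i})$, the class $[f_i]$ generates the $i$th summand, while the summands with $i > s$ vanish, so the principal parts $v_1/z^{k_1}, \ldots, v_s/z^{k_s}$ span $\tau_x$ over $\cO_X$, as required. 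There is no genuine obstacle here; the only points needing care are orienting the elementary divisor theorem correctly --- the larger module $\tV_x$ carries the adapted basis, and the powers of $z$ sit in the basis of the smaller module $V_x$ --- and observing that the sections $v_i$ produced are literally basis vectors of $V_x$, which is exactly what forces their linear independence at $x$.
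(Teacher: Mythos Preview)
Your proof is correct and complete, but it follows a genuinely different route from the paper's. The paper begins with an arbitrary generating set $p_1,\ldots,p_s$ of $\tau_x$ over $\cO_{X,x}$, writes each $p_j = v_j/z^{k_j}$ with $v_j$ nonvanishing at $x$, and then performs an explicit reduction: whenever the leading terms $v_j(x)$ satisfy a linear relation $\sum a_j v_j(x)=0$, it replaces the last $p_l$ involved by an $\cO_{X,x}$-linear combination with strictly lower pole order, iterating until the $v_j$ are independent at $x$. Your argument instead invokes the elementary divisor theorem for the inclusion of free $R$-modules $V_x \subseteq \tV_x$ over the DVR $R=\cO_{X,x}$, which produces adapted bases in one stroke and makes the conclusion immediate.

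What each approach buys: your argument is shorter, cleaner, and yields the slightly stronger fact that the $v_i$ are part of an honest $R$-basis of $V_x$ (not merely independent modulo $m_x$); it also makes the identity $\deg\tau_x = \sum k_i$ transparent via $\tau_x \cong \bigoplus R/(z^{k_i})$. The paper's hands-on reduction avoids appealing to Smith normal form and keeps the argument self-contained at the level of principal parts, which fits the surrounding narrative; it also makes explicit how one passes from an arbitrary generating set to a good one, a perspective relevant to the later discussion of non-uniqueness of $Z$ (Remark~\ref{loun}). Either way, the output is the same data, and everything downstream (Corollary~\ref{descrVtildedual} and the construction of $Z$) goes through unchanged.
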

\begin{proof} 
Let $p_1 , \ldots , p_s$ be a set of generators for $\tau_x$ over ${\Ox}_{,x}$. For $1 \leq j \leq s$, we write $k_j$ for the order of the pole of $p_j$, and reorder so that $k_1 \geq k_2 \geq \cdots \geq k_s$. Then for each $j$, there exists a local section $v_j \in V_x \backslash m_x V_x$ such that $p_j = \frac{v_j}{z^{k_j}}$. (The section $v_j$ is well-defined modulo $m_x^{k_j} V_x$.)
We claim that after reducing $s$ and the $k_j$ if necessary, we may assume that the $v_j$ are linearly independent at $x$.

Suppose there is a nontrivial $\K$-linear combination $\sum_{j=1}^s a_j v_j$ whose value at $x$ is zero; that is, which belongs to $m_x V_x$.
Let $l \in \{1, \ldots, s\}$ be the largest index such that $a_l \ne 0$. Since $k_1 \geq k_2 \geq \cdots \geq k_s$, the function $z^{k_j - k_l}$ belongs to $\cO_{X,x}$ for each $j < l$. Set
\begin{equation} p_l' \ := \ p_l + \sum_{j = 1}^{l-1} z^{k_j - k_l} \frac{a_j}{a_l} p_j. \label{plprime} \end{equation}
A computation shows that
\[ p_l' \ = \ \frac{1}{a_l} \cdot \frac{\left( \sum_{j=1}^l a_j v_j \right)}{z^{k_l}}. \]
This has a pole of order at most $k_l - 1$ at $x$, since the numerator belongs to $m_x V_x$. In light of (\ref{plprime}), the set
\[ p_1, \ \ldots, \ p_{l-1}, \ p_l', \ p_{l+1}, \ \ldots, \ p_s \]
also generates $\tau_x$ over $\Ox$. 
 After performing a finite number of operations of this kind (possibly annihilating some of the $p_j$), we arrive at a generating set $p_1, \ldots, p_s$ of principal parts whose leading coefficients define linearly independent points of $V|_x$. In particular, $s \leq r$.
\end{proof}

If $s < r$, we complete the partial frame $v_1 , \ldots, v_s$ at $x$ to a frame $v_1 , \ldots , v_r$. Write $f_1 , \ldots , f_r$ for the dual frame of $V^*$ near $x$. Note that if $k_1 \ge 2$, these frames carry infinitesimal information also.

\begin{corollary} \label{descrVtildedual} The locally free sheaf $\tV^*$ is spanned near $x$ by
\[ z^{k_1} f_1 , \ z^{k_2} f_2 , \ \ldots, \ z^{k_s} f_s , \ f_{s+1}, \ \ldots , \ f_r . \]
In particular, $\deg \tau_x = k_1 + \cdots + k_s$.
\end{corollary}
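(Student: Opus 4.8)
The plan is to combine the description of $\tV^*$ from Proposition \ref{vdual} with the local generators of $\tau_x$ furnished by Proposition \ref{goodframe}, working throughout with stalks at $x$. By Proposition \ref{vdual}, a germ $f \in V^*_x$ lies in $\tV^*_x$ if and only if $\langle p , f \rangle$ vanishes in $\sPrin(\Ox)$ for every $p \in \tau_x$; and since that pairing is $\Ox$-linear in the first variable, it suffices to impose this condition for the generators $p_j = v_j / z^{k_j}$, $1 \le j \le s$, of $\tau_x$ produced in Proposition \ref{goodframe}.

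Write a general germ in $V^*_x$ as $f = \sum_{i=1}^r g_i f_i$ with $g_i \in \cO_{X,x}$. By $\Ox$-linearity of the pairing together with the dual frame relations $\langle v_j , f_i \rangle = \delta_{ij}$, the principal part $\langle p_j , f \rangle$ is represented by the rational function
\[ \frac{\langle v_j , f \rangle}{z^{k_j}} \ = \ \frac{g_j}{z^{k_j}} , \]
which is regular --- that is, $\langle p_j , f \rangle = 0$ in $\sPrin(\Ox)$ --- exactly when $z^{k_j}$ divides $g_j$. Imposing this for $1 \le j \le s$ shows that $\tV^*_x$ consists precisely of those $\sum_i g_i f_i$ with $g_j \in z^{k_j}\cO_{X,x}$ for $j \le s$; that is, $\tV^*_x$ is the $\cO_{X,x}$-module generated by $z^{k_1} f_1 , \ldots , z^{k_s} f_s , f_{s+1} , \ldots , f_r$. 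This is the first assertion.

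For the degree formula, dualising this frame shows that near $x$ the sheaf $\tV$ is generated over $\cO_{X,x}$ by $z^{-k_1} v_1 , \ldots , z^{-k_s} v_s , v_{s+1} , \ldots , v_r$; equivalently, $\tV$ is generated over $V$ by the rational sections $v_1 / z^{k_1} , \ldots , v_s / z^{k_s}$, consistent with Proposition \ref{goodframe}. Hence
\[ \tau_x \ = \ \tV_x / V_x \ \cong \ \bigoplus_{j=1}^s \cO_{X,x} / ( z^{k_j} ) , \]
which has length $k_1 + \cdots + k_s$; thus $\deg \tau_x = k_1 + \cdots + k_s$.

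None of this presents a genuine obstacle: every step is a direct local computation. The only points that deserve a little care are the reduction to testing the finitely many generators $p_j$ of $\tau_x$ (legitimate because the pairing of Proposition \ref{vdual} is $\Ox$-linear and everything is local at $x$) and the translation of ``vanishes in $\sPrin(\Ox)$'' into the divisibility condition $z^{k_j} \mid g_j$.
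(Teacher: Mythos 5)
Your proof is correct and follows exactly the route the paper intends: the paper's own proof of this corollary is the single line ``This follows from Propositions \ref{vdual} and \ref{goodframe}'', and your computation (testing the kernel condition of Proposition \ref{vdual} against the generators $p_j = v_j/z^{k_j}$ via the dual frame, then reading off $\length(\tau_x)$) is precisely the omitted verification.
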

\begin{proof} This follows from Propositions \ref{vdual} and \ref{goodframe}. \end{proof}

Each $v_j$ determines uniquely a nonzero point of $V|_x$, given by the image of $v_j$ in the fibre $V_x / m_x V_x$. Hence $v_j$ also determines a point of $\PP V|_x$, which we denote by $\nu_j$.

\begin{remark} \label{loun} If $\length (\tau_x) = 1$ then the generator $p_1 \in V(x)/V$ is unique up to scalar multiple, and the point $\nu_1$ is uniquely determined. However, if $\length (\tau_x) \geq 2$ then the choice of generators $p_1, \ldots , p_s$, and hence the points $\nu_j$ are in general not unique. For example, consider the elementary transformation $0 \to V \to V(x) \to V(x)/V \to 0$. Any frame $\{ v_i \}$ for $V$ near $x$ determines a generating set $\{ \frac{v_i}{z} \}$ for $\tau$ and points $\nu_1 , \ldots , \nu_r$ which span $\PP V|_x$. Furthermore, a general choice of two distinct frames will define different spanning sets for $\PP V|_x$. Note also that the number $s$ of generators of $\tau_x$ over $\Ox$ (as distinct from over $\K$) is not determined by $\length (\tau_x)$. For example, if $\tau_x \cong \cO_x^{\oplus 2}$ then two generators are required over $\Ox$, whereas if $\tau_x \cong \cO_{2x}$ then one suffices. \end{remark}

\begin{proof}[Proof of Theorem \ref{Zexists}]

Firstly, suppose that $\tau$ is supported at a single point $x \in X$. For $1 \le j \le s$, define an elementary transformation $W_j$ of $V^*$ by
\[ W_j = \Ker \left( \frac{v_j}{z^{k_j}} \colon V^* \to \sPrin(\Ox) \right) \]
where the $v_j$ are as in Proposition \ref{goodframe}. 
 Clearly $V^* / W_j$ is supported at $x$, and $W_j$ is spanned near $x$ by
\begin{equation} f_1 , \ \ldots, \ f_{j-1} , \ z^{k_j} f_j , \ \ldots, \ f_s, \ f_{s+1}, \ \ldots , \ f_r , \label{wj} \end{equation}
where the $f_i$ are as defined before Corollary \ref{descrVtildedual}. (Note that $\tV^* = \cap_{j=1}^s W_j$.)

Since $\pi$ is flat, the functor $\pi^*$ is exact. We have a commutative diagram with exact rows:
\begin{equation} \xymatrix{ 0 \ar[r] & \pi^* W_j \ar[d] \ar[r] & \pi^* V^* \ar[d] \ar[r] & \pi^* ( V^* / W_j ) \ar[r] \ar[d] & 0 \\
 0 \ar[r] & \cF_j \ar[r] & \Opvo \ar[r] & \cT_j \ar[r] & 0 } \label{fj} \end{equation}
where $\cF_j$ is the image of the composed map $\pi^* W_j \to \pi^* V^* \to \Opvo$, and $\cT_j$ is the quotient. We will sometimes abuse notation and denote both $\pi^* z$ and its image in (\ref{fj}) simply by $z$; and similarly for $f_1 , \ldots , f_r$. From (\ref{wj}), it follows that $\cF_j \to \Opvo$ is an isomorphism away from $\nu_j$. Furthermore, 
 a $\K$-basis for $\cT_j$ is given by the images of
\[ f_j, \ z f_j, \ \ldots , \ z^{k_j - 1} f_j. \]
In particular, $\length \cT_j = k_j$.

Tensoring the lower row of (\ref{fj}) with the invertible sheaf $\Opv(-1)$, we obtain
\[ 0 \to \cI_j \to \Opv \to \cT_j \otimes \Opv(-1) \to 0 \]
where $\cI_j := \cF_i \otimes \Opv(-1)$ is an ideal sheaf. We write $Z_j$ for the subscheme of $\PP V$ defined by $\cI_j$. Then $\cT_j \otimes \Opv(-1)$ is naturally identified with $\cO_{Z_j}$, whence we see that $Z_j$ has length $k_j$.

We now define
\[ \cI \ := \ \bigcap_{j=1}^s \cI_j. \]
As the support of each $Z_j$ is the isolated point $\nu_j$, we see that $\cI$ is the ideal sheaf of $Z_1 \cup \cdots \cup Z_s =: Z$, a subscheme supported along $\{ \nu_1 , \ldots , \nu_s \}$ of $\PP V$. This $Z$ has dimension zero and length $k_1 + \cdots + k_s$. 
For each $j$, the stalk of $\cI$ at $\nu_j$ coincides with that of $\cI_j$, and is generated by
\begin{equation} z^{k_j}, \frac{f_i}{f_j} \ : \ 1 \leq i \leq r; \ i \neq j. \label{Idescr} \end{equation}
Correspondingly, the stalk of $\cI \otimes \Opvo$ at $\nu_j$ is generated by the images of
\begin{equation} f_1, \ldots, f_{j-1}, z^{k_j}f_j, f_{j+1} , \ldots , f_r . \label{Ioodescr} \end{equation}

We now show that $\pi_* \left( \cI \otimes \Opvo \right) = \widetilde{V}^*$. This is essentially book-keeping. 
Clearly the two sheaves are equal away from $x$. Let $U \subseteq X$ be a neighbourhood of $x$ over which $V$ is trivial. We have
\[ \Opv \left( \pi^{-1} U \right) \ \cong \ \cO_{X, U} \otimes \Gamma \left( \PP^r, \cO_{\PP^r} \right) \ \cong \ \cO_{X, U}. \]
Thus a section $t$ of $\Opvo$ over $\pi^{-1} U$ is of the form $h_1 \cdot f_1 + \cdots + h_r \cdot f_r$
where each $h_j \in \cO_{X, U}$ and the $f_j$ are as above. 
 By (\ref{Ioodescr}), such a $t$ belongs to $\Gamma ( \pi^{-1} U, \cI \otimes \Opvo )$ if and only if
 $h_j \in z^{k_j} \cdot \cO_{X, U}$ for each $j$. By Corollary \ref{descrVtildedual}, this is equivalent to the statement that $t$, viewed as a section of $V^* \to X$, takes its values in $\tV^*$. Thus $\pi_* \left( \cI \otimes \Opvo \right) = \tV^*$. This proves (a) in case $\tau$ is supported at a single point $x$.\\
\par
More generally, if $\tau$ is supported at two or more points, the construction above yields an ideal sheaf $\cI(x)$ and a zero-dimensional subscheme $Z(x)$ of length $d_x$ for each $x \in \Supp ( \tau )$. We write
\[ Z \ := \bigcup_{x \in \Supp (\tau)} Z(x) \quad \hbox{and} \quad \cI_Z \ := \bigcap_{x \in \Supp \tau} \cI(x) .\]
By the local argument above applied to each $x \in \Supp ( \tau )$, we have $\pi_* \left( \cI_Z \otimes \Opvo \right) = \tV^*$. This completes the proof of (a) in general.

(b) If $\tau$ has reduced support, then $d_x = 1$ for each $x \in \Supp ( \tau )$. By Remark \ref{loun}, the intersection of $Z$ with the fibre over $x$ consists of a single, uniquely determined, reduced point. The statement follows. \end{proof}

\subsection*{Hilbert schemes of points} Let us now give a refinement of Theorem \ref{Zexists}. The scheme $\Hilb^d ( \PP V )$ is connected since $\PP V$ is, but by \cite{CCEV} it is not irreducible for $r \ge 4$ and $e \ge 8$. We denote by $\Hilb^d ( \PP V )_0$ the irreducible component containing reduced subschemes.

\begin{theorem} \label{surj} \quad \begin{enumerate}
\item[(a)] The association $Z \ \mapsto \ \pi_* \left( \Opvo \otimes \cI_Z \right)$ defines a rational map
\[ \alpha \colon \Hilb^d ( \PP V ) \ \dashrightarrow \ \Quot^{0, d} (V^*) \]
whose restriction to $\Hilb^d ( \PP V )_0$ is surjective.
\item[(b)] The restriction of $\alpha$ to the subset
\[ \{ Z \in \Hilb^d ( \PP V )_0 : \pi(Z) \hbox{ is reduced}\} \ \subset \ \Sym^d \PP V \backslash \Delta \]
is bijective. In particular, the restriction of $\alpha$ to $\Hilb^d ( \PP V )_0$ is a birational equivalence.
\item[(c)] No other component of $\Hilb^d ( \PP V )$ dominates $\Quot^{0, d} ( V^* )$. \end{enumerate} \end{theorem}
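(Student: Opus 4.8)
The plan is to build on the construction in the proof of Theorem \ref{Zexists}, which already produces, for each element $0 \to V \to \tV \to \tau \to 0$ of $\Quot^{0,d}(V^*)$, a subscheme $Z \in \Hilb^d(\PP V)$ with $\alpha(Z) = \tV^*$; this will give surjectivity once we verify that such $Z$ can be chosen in $\Hilb^d(\PP V)_0$ and that $\alpha$ is a genuine rational map. First I would argue that $\alpha$ is defined on a dense open subset: the formation of $\pi_*(\cI_Z \otimes \Opvo)$ commutes with base change and gives a subsheaf of $V^*$ of colength $d$ precisely when $h^1$ of the relevant sheaf vanishes fibrewise and the cokernel is torsion of the right length; this is an open condition on $\Hilb^d(\PP V)$ containing, by Corollary \ref{descrVtildedual} and the local computations in the proof of Theorem \ref{Zexists}, every $Z$ whose image $\pi(Z)$ consists of $d$ distinct points, and more generally every $Z$ that is a union of the ``local'' schemes $Z(x)$ constructed there. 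Flatness of the resulting family of quotients over this open locus then promotes $\alpha$ to a morphism on that open set, hence a rational map.

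For part (b), I would restrict attention to the locus where $\pi(Z)$ is a reduced divisor $x_1 + \cdots + x_d$ on $X$. Over such a $Z$, each fibre $Z \cap \PP V|_{x_i}$ is a single reduced point $\nu_i$ (since length and cardinality of the fibre agree), so $Z$ lies in $\Hilb^d(\PP V)_0$ and is identified with the point $(\nu_1, \ldots, \nu_d)$ of $\Sym^d \PP V \setminus \Delta$; conversely every such tuple arises. On this locus $\alpha$ sends $(\nu_1,\ldots,\nu_d)$ to the elementary transformation $\tV^*$ with $\Ker(V|_{x_i} \to \tV|_{x_i})$ the line $\nu_i$, and part (b) of Theorem \ref{Zexists} says this is a bijection onto the corresponding (dense, open) locus of $\Quot^{0,d}(V^*)$ where $\tau$ has reduced support. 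Since both sides have dimension $\dim \Sym^d \PP V = d \cdot r$ and $\alpha$ is a bijective morphism between smooth varieties in characteristic zero, it is an isomorphism there; in particular $\alpha|_{\Hilb^d(\PP V)_0}$ is birational, and combined with the surjectivity from part (a) this finishes (a) and (b).

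For part (c), suppose some other component $H' \subset \Hilb^d(\PP V)$ dominates $\Quot^{0,d}(V^*)$. A general point $Z'$ of $H'$ would then map to a general $\tV^* \in \Quot^{0,d}(V^*)$, i.e.\ one with $\tau$ of reduced support. But for such $\tV^*$, part (b) of Theorem \ref{Zexists} says the scheme $Z$ with $\alpha(Z) = \tV^*$ is \emph{uniquely} determined and reduced, hence lies in $\Hilb^d(\PP V)_0$; so $Z' = Z \in \Hilb^d(\PP V)_0 \cap H'$. As $Z'$ is a general point of $H'$, this forces $H' \subseteq \Hilb^d(\PP V)_0$, contradicting that $H'$ is a different component. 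I expect the main obstacle to be the first paragraph: pinning down exactly the open locus on which $\alpha$ is defined and checking that the family of quotient sheaves $V^* \to \pi_*(\Oz \otimes \Opvo)$ is flat of the right length there (the subtlety being the $\pi$-defective behaviour, where $\deg V_Z$ can drop) — but since we only need $\alpha$ to be \emph{defined generically} and to agree with the explicit construction on the reduced locus, it suffices to exhibit one dense open set, namely a neighbourhood of the locus $\{Z : \pi(Z) \text{ reduced}\}$, where everything is controlled by Corollary \ref{descrVtildedual}.
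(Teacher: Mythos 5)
There is a genuine gap in your argument for the surjectivity claim in part (a). You correctly reduce it to the assertion that the subscheme $Z$ produced by Theorem \ref{Zexists} ``can be chosen in $\Hilb^d(\PP V)_0$'', but you never verify this, and it is not automatic: as the paper notes (citing \cite{CCEV}), $\Hilb^d(\PP V)$ is reducible for $r \ge 4$ and $d \ge 8$, so a nonreduced zero-dimensional subscheme need not be smoothable, i.e.\ need not lie in the component containing reduced subschemes. Your second paragraph only establishes that $\alpha$ is a bijection between the two reduced loci; that gives dominance (hence birationality), but the image of a non-proper morphism from a dense open set can miss points of the target, so this does not ``finish (a)''. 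In particular, surjectivity onto the locus of quotients with \emph{nonreduced} support is exactly the part that requires the smoothability statement.

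The missing step, which is the real content of the paper's proof of (a), is to show that each local piece $Z_j$ of the scheme constructed in Theorem \ref{Zexists} is smoothable. The paper does this by observing that $Z_j$ is curvilinear: with
\[ \cI_{Z_j} \ = \ \left( \pi^* z^{k}, \ \frac{\pi^* f_2}{\pi^* f_1}, \ \ldots, \ \frac{\pi^* f_r}{\pi^* f_1} \right), \]
the direct image $\pi_* \cO_{Z_j}$ is generated by $1, z, \ldots, z^{k-1}$, so $\Ox \to \pi_* \cO_{Z_j}$ is surjective and $\pi$ restricts to a closed embedding of $Z_j$ into the smooth curve $X$; curvilinear schemes are smoothable, hence $Z \in \Hilb^d(\PP V)_0$. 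Incidentally, you locate the main obstacle in the wrong place: the definition of $\alpha$ on the $\pi$-nondefective locus goes through essentially as you sketch (flatness is cheap since rank and degree are constant over a curve, and then one invokes the universal property of $\Quot$), and your arguments for (b) and (c) are fine and in line with the paper's.
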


\begin{proof} (a) It is straightforward to globalise the construction $V_Z^* = \pi_* \left( \cI_Z \otimes \Opvo \right)$ and obtain a family of elementary transformations of $V^*$ of degree $-\deg V - d$, parametrised by the locus
\[ \{ Z \in \Hilb^d (\PP V) : \deg ( V_Z ) = \deg V + d \} . \]
As the rank and degree are constant and $X$ has dimension $1$, this family is flat. 
 The existence of $\alpha$ then follows from the universal property of $\Quot$ schemes.

To conclude, by Theorem \ref{Zexists} it will suffice to show that if $Z = Z_1 \cup \cdots \cup Z_s$ is a nonreduced scheme of length $d \ge 2$ arising from the construction in the proof of Theorem \ref{Zexists}, then $Z$ is smoothable. Clearly $Z$ is smoothable if and only if each $Z_j$ is smoothable. Thus it will suffice to prove the smoothability of a $Z$ supported at a single point $\nu_1 \in \PP V|_x$ with length $k \ge 2$. 
 In this case, with the setup of Theorem \ref{Zexists}, we have
\[ \cI_Z \ = \ \left( \pi^* z^k , \frac{\pi^*f_2}{\pi^*f_1} , \ldots , \frac{\pi^*f_r}{\pi^*f_1} \right) \]
where $f_1, \ldots , f_r$ is a frame for $V^*$ on an open subset $U \subseteq X$, and $\Oz$ is generated by the images of $1, \pi^*z, \ldots , \pi^*z^{k-1}$.

Now since $\pi_* \Oz$ is supported at $x$ and generated by $1, z, \ldots , z^{k-1}$, clearly the map $\Ox \to \pi_* \Oz$ is surjective. Therefore, $\pi \colon \PP V \to X$ restricts to a closed embedding $Z \hookrightarrow X$. Thus $Z$ is curvilinear and hence smoothable.

Part (b) follows from Theorem \ref{Zexists} (b). For the rest; clearly $V_Z / V$ has reduced support only if $Z \in \Hilb^d ( \PP V )_0$. As quotients with reduced support are dense in $\Quot^{0, d} ( V^* )$, we obtain (c). \end{proof}

\noindent In the next section, we will describe the indeterminacy locus of $\alpha$ in more detail.

\section{Defective secants} \label{DefectiveSecants}

Here we recall some facts about defective secants, which will be used in several contexts. Let $Y$ be a variety equipped with a line bundle $\cL$ and a map $\psi \colon Y \dashrightarrow |\cL|^*$ (not necessarily base point free). If $Z \subseteq Y$ is a subscheme, then $\Span \left( \psi(Z) \right)$ is the projective linear subspace
\begin{equation} \PP \Ker \left( H^0 (Y, \cL )^* \to H^0 \left( Y, \cI_Z \otimes \cL \right)^* \right). \label{defspan} \end{equation}
When the map $\psi$ is clear from the context, we will simply write $\Span (Z)$. If $Z$ is of dimension zero and reduced, then $\Span (Z)$ is the secant spanned by the images of the points of $Z$. In general, $\Span (Z)$ is a subspace of the span of the union of certain osculating spaces to $\psi (Y)$ at points of $\Supp (Z)$.

\begin{definition} Suppose $Z \subset Y$ has dimension zero. We recall that the \textsl{defect} of $\psi(Z)$ is defined by
\[ \defe \left( \psi(Z) \right) \ := \ \length Z - 1 - \dim ( \Span (\psi(Z) ) ). \]
Again, if the context is clear, we will simply write $\defe (Z)$. We say that $Z$ is \textsl{nondefective} if $\defe (Z) = 0$, and \textsl{defective} otherwise. If $\Span (Z)$ is empty, we define $\dim ( \Span (Z)) = -1$. \end{definition}

\begin{remark} The map $\psi$ is base point free if and only if all $y \in Y$ are nondefective, and an embedding if and only if all $Z \in \Hilb^2 (Y)$ are nondefective. \end{remark}

\begin{remark} Recall that the $n$th secant variety $\Sec^k (Y)$ of a nondegenerate variety $Y \subset \PP^N$ is the Zariski closure of the union of the linear spans of all subsets of $k$ points of $Y$. In general, $Y$ is said to be ``secant defective'' if some $\Sec^k Y$ has less than the expected dimension. The above definition of a defective scheme of dimension zero is a special case of this. 
\end{remark}

Note that if $Z$ has dimension zero, then nondefectivity of $Z$ is equivalent to the surjectivity of the restriction map in (\ref{defspan}). We will use this observation to study the indeterminacy locus of the map $\alpha \colon \Hilb^d ( \PP V ) \dashrightarrow \Quot^{0, d} ( V^* )$ defined in the previous section.

\subsection{Relatively nondefective subschemes} \label{PiNondefectivity}

We return to the situation of Theorem \ref{surj}. The map $\alpha \colon \Hilb^d ( \PP V ) \dashrightarrow \Quot^{0, d} (V^* )$ is defined at $Z$ if and only if $\deg V_Z = \deg V + d$. This is clearly the case for generic $Z \in \Hilb^d ( \PP V)_0$, in particular if $\Supp (\pi(Z))$ consists of $d$ distinct points of $X$.  However, it is not true if for example $Z$ is a union of $r+1$ points $\nu_1 , \ldots , \nu_{r+1}$ in general position in a fibre $\PP V|_x$. Here $V_Z = V \otimes \Ox (x)$ has degree $\deg V + r < \deg V + \length Z$, as the evaluation map
\[ \pi_* ( \Opvo )|_x \ = \ V^*|_x \ \to \ \pi_* ( \Opvo \otimes \Oz ) \ = \ \bigoplus_{i = 1}^{r+1} \nu_i^* \]
is not surjective. This motivates a definition.

\begin{definition} \label{defPiNondef} Let $Z \subset \PP V$ be a subscheme of dimension zero. Recall that we have defined $\tau_Z := V_Z / V$. The subscheme $Z$ will be called \textsl{$\pi$-nondefective} if the following equivalent conditions obtain:
\begin{itemize}
\item the map $V^* \to \pi_* ( \cI_Z \otimes \Opvo )$ in (\ref{defineVZ}) is surjective;
\item the map $V^* / V_Z^* \to \pi_* ( \Oz \otimes \Opvo )$ is an isomorphism;
\item $\deg V_Z = \deg V + \length (Z)$; equivalently $\deg (\tau_Z) = \length (Z)$.
\end{itemize}
Otherwise, $Z$ will be called \textsl{$\pi$-defective}. \end{definition}

Geometrically speaking, $Z$ is $\pi$-defective if and only if for some $x \in X$ the image of $Z \cap \PP V|_x \to \PP^{r-1}$ defined by the restriction of $\Opvo$ is defective. This means that $Z$ is secant defective in $|\Opvo \otimes \pi^* L|^*$ for \emph{any} $L \in \Pic (X)$.

\begin{remark} \begin{enumerate}
\item[(a)] The scheme $Z$ constructed in the proof of Theorem \ref{Zexists} is clearly $\pi$-nondefective.
\item[(b)] As $R^1 \pi_* \Opvo = 0$, a subscheme $Z$ is $\pi$-nondefective if and only if
\[ R^1 \pi_* ( \cI_Z \otimes \Opvo) \ = \ 0 . \]
\item[(c)] By definition, the locus of \emph{$\pi$-defective} subschemes in $\Hilb^d ( \PP V )$ is exactly the indeterminacy locus of $\alpha \colon \Hilb^d ( \PP V ) \dashrightarrow \Quot^{0, d}( V^* )$.
\item[(d)] By either (b) or (c) and the universal property of the Hilbert scheme, it follows that $\pi$-nondefectivity is an open property in (flat) families of zero-dimensional subschemes of length $d$ of $\PP V$. \end{enumerate} \end{remark}

\section{Geometric Riemann--Roch for scrolls} \label{grr}

Let $V \to X$ be any vector bundle with $h^1 (X, V) \ge 1$, and $\pi \colon \PP V \to X$ the corresponding scroll. We begin by describing a map $\PP V \dashrightarrow \PP H^1 (X, V)$. This is a slight generalisation of a construction in \cite[{\S} 3]{HR}, also used in various guises in \cite{CH1}, \cite{Hit7}, \cite{Br17} and elsewhere.

By Serre duality and the projection formula, there are identifications
\begin{equation} H^1 (X, V) \ \cong \ H^0 (X, \Kx \otimes V^* )^* \ = \ H^0 ( \PP V, \pi^* \Kx \otimes \Opv(1) )^* \label{stringofidens} \end{equation}
By standard algebraic geometry, we obtain a map $\psi \colon \PP V \dashrightarrow \PP H^1 (X, V)$. By the proof of \cite[Theorem 3.1]{HR}, generalising a standard fact on line bundles, we have:

\begin{proposition} The map $\psi$ is an embedding if and only if for all effective degree two divisors $x+y$ on $X$ we have
\[ h^0 (X, \Kx(-x-y) \otimes V^*) \ = \ h^0 (X, \Kx \otimes V^*) - 2r; \]
equivalently, if $h^0 (X, V (x+y)) = h^0 (X, V)$ for all $x+y$. \end{proposition}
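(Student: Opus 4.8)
The plan is to reduce the statement to a cohomological condition on $X$ and then translate the characterisation of embeddings by length-two subschemes (from {\S}\ref{DefectiveSecants}) into that condition. Write $W := \Kx \otimes V^*$; by the identifications in (\ref{stringofidens}) we have $H^0 ( \PP V, \pi^* \Kx \otimes \Opvo ) = H^0 (X, W)$, so $\psi$ is the map $\PP V \dashrightarrow \PP H^0 (X, W)^*$ given by this linear system. First I would dispose of the word ``equivalently'': Serre duality gives $h^0(X, \Kx(-x-y) \otimes V^*) = h^1(X, V(x+y))$ and $h^0(X, \Kx \otimes V^*) = h^1(X, V)$, and Riemann--Roch applied to $V$ and to $V(x+y)$ turns the first displayed equality into $h^0(X, V(x+y)) = h^0(X, V)$. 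So it suffices to prove: $\psi$ is an embedding iff for every effective divisor $D$ of degree two on $X$ the restriction map $H^0 (X, W) \to H^0 (X, W \otimes \cO_D)$ is surjective --- the latter being equivalent to $h^0(X, W(-D)) = h^0(X, W) - 2r$ via the sequence $0 \to W(-D) \to W \to W \otimes \cO_D \to 0$ and $\dim_\K H^0(X, W \otimes \cO_D) = 2r$. (If $r = 1$ then $\PP V = X$ and this is the classical criterion for very ampleness of $\Kx \otimes V^*$; so I assume $r \ge 2$.)

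For the ``if'' direction the bridge is the observation that for $r \ge 2$ every $Z \in \Hilb^2 ( \PP V )$ is $\pi$-nondefective (Definition \ref{defPiNondef}): indeed $Z \cap \PP V|_x$ has length at most two in the fibre $\PP V|_x \cong \PP^{r-1}$, hence is nondefective for $\Opv(1)|_{\PP V|_x} = \cO_{\PP^{r-1}}(1)$, so the criterion following Definition \ref{defPiNondef} applies. Writing $\cL := \pi^* \Kx \otimes \Opvo$, it follows that for any $Z \in \Hilb^2 ( \PP V )$ the restriction $\rho_Z \colon H^0 (X, W) \to H^0 ( Z, \cL|_Z )$ factors as $H^0 (X, W) \to H^0 (X, W \otimes \cO_D) \twoheadrightarrow H^0 ( Z, \cL|_Z )$, where $D = \pi ( Z )$ if $\pi|_Z$ is a closed embedding and $D = 2x$ if $Z \subset \PP V|_x$; surjectivity of the second arrow is exactly $\pi$-nondefectivity of $Z$ (apply $\pi_*$ to $0 \to \cI_Z \otimes \Opvo \to \Opvo \to \Oz \otimes \Opvo \to 0$, use $R^1 \pi_* \Opvo = 0$, and twist by $\Kx$). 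Hence if the cohomological condition holds for all $D$, then $\rho_Z$ is surjective for all $Z$, so $\psi$ is an embedding.

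For the converse I would argue contrapositively, using the dictionary that a point $\nu$ of $\PP V|_x$ corresponds to a hyperplane $\tilde H_\nu \subset W|_x$ (tensoring by the line $(\Kx)_x$ does not change hyperplanes), with $s \in H^0(X, W)$ vanishing at $\nu$ iff $s(x) \in \tilde H_\nu$, and with first-order vanishing of $s$ at $\nu$ along a tangent direction read off similarly after fixing a uniformiser. Suppose the condition fails for some $D = x + y$. If $H^0(X, W) \to W|_x$ is already not surjective for some $x$, then any $\nu$ whose hyperplane contains the image is a base point of $\psi$, and $\psi$ is not even a morphism. Otherwise $H^0(X, W) \twoheadrightarrow W|_x$ for every $x$; pick a nonzero $\xi$ in the annihilator of $\mathrm{Image}\bigl(H^0(X, W) \to H^0(X, W \otimes \cO_D)\bigr)$. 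When $x \ne y$, write $\xi = (\xi_1, \xi_2) \in W|_x^* \oplus W|_y^*$; both $\xi_i$ are nonzero since the fibre restrictions are onto, and the points $\nu_1, \nu_2$ with $\tilde H_{\nu_i} = \ker \xi_i$ are not separated by $\psi$. When $x = y$, write $W \otimes \cO_{2x} = W|_x \oplus W|_x$ via the uniformiser; then $\xi = (\xi_1, \xi_2)$ has $\xi_2 \ne 0$, and I would take $\nu \in \PP V|_x$ with $\tilde H_\nu = \ker \xi_2$ together with the tangent direction at $\nu$ oblique to the fibre whose associated functional on $W|_x$ is $\xi_1 - \alpha \xi_2$, for the unique $\alpha$ making this vanish on the line of $W|_x$ complementary to $\tilde H_\nu$ singled out by $\nu$; this produces $Z \in \Hilb^2 ( \PP V )$ supported at $\nu$ with $\rho_Z$ not surjective. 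In every case $\psi$ fails to be an embedding.

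The step I expect to be the main obstacle is exactly this last one: realising a failure of surjectivity of $H^0(X, W) \to W \otimes \cO_{2x}$ by an \emph{honest} element of $\Hilb^2 ( \PP V )$. The only length-two subschemes of $\PP V$ lying over a single point of $X$ that ``see'' the $\mathfrak{m}_x$-direction are the tangent vectors at some $\nu \in \PP V|_x$ oblique to the fibre, so the available freedom is precisely a choice of $\nu$ and of an oblique direction there (an $(r-1)$-dimensional family for each $\nu$), and the linear-algebra bookkeeping sketched above --- extracting $\nu$ from the ``$\mathfrak{m}_x$-component'' of an annihilator vector and the direction from its ``value component'' --- is what shows this is enough. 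Everything else (the factorisations, surjectivity of fibrewise restrictions on $\PP^{r-1}$, and the dictionary between points of $\PP V|_x$, hyperplanes of $W|_x$, and vanishing of sections) is routine.
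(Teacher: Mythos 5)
Your argument is correct, but there is nothing in the paper to compare it against: the paper gives no proof of this proposition, deferring entirely to the proof of Theorem 3.1 of \cite{HR}. Your proof is a sound self-contained substitute. The reduction of the ``equivalently'' clause via Serre duality and Riemann--Roch is routine, and the translation of ``embedding'' into ``every $Z \in \Hilb^2(\PP V)$ imposes independent conditions on $|\pi^*\Kx \otimes \Opvo|$'' is exactly the Remark in {\S}3 of the paper. The genuinely useful observation is the one you isolate: for $r \ge 2$ every length-two subscheme of $\PP V$ is automatically $\pi$-nondefective (length-two subschemes of a fibre $\PP^{r-1}$ impose independent conditions on $\cO_{\PP^{r-1}}(1)$, and an oblique tangent vector meets the fibre in a reduced point), so the only obstructions to independence come from the base, i.e.\ from surjectivity of $H^0(X,W) \to H^0(X, W \otimes \cO_D)$ for $W = \Kx \otimes V^*$ and $\deg D = 2$. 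I checked the step you flag as delicate: given $\xi = (\xi_1,\xi_2)$ annihilating the image in $W \otimes \cO_{2x} \cong W|_x \oplus W|_x$ with $\xi_2 \neq 0$, taking $\nu = [\xi_2] \in \PP V|_x$ and the oblique tangent direction determined by $\xi_1|_{\ker \xi_2}$ (your $\xi_1 - \alpha\xi_2$ normalisation) does produce a $Z$ through which $\xi$ factors --- in local coordinates $u_i = f_i/f_1$ at $\nu$ the functionals on $W \otimes \cO_{2x}$ factoring through $H^0(\cL|_Z)$ for $\cI_Z = (z^2, u_i - a_i z)$ are precisely $\beta w_1 + \gamma\bigl(w_1' + \sum_{i \ge 2} a_i w_i\bigr)$, and the free parameters $\beta, \gamma, a_i$ realise any such $\xi$. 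So the contrapositive direction is complete. The only cosmetic quibble is the phrase ``the line of $W|_x$ complementary to $\tilde H_\nu$ singled out by $\nu$'': $\nu$ singles out the hyperplane, not a complement, so the normalising $\alpha$ depends on an auxiliary choice of complement; but the resulting subscheme $Z$ and the defectivity conclusion do not, since only $\xi_1 \bmod \K\xi_2$ restricted to $\ker\xi_2$ enters.
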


\noindent The following key result describes a link between the geometry of $\psi ( \PP V )$ and the elementary transformations $V_Z$.

\begin{proposition} \label{SpanZIdentification} Let $Z$ be a subscheme of $\PP V$ (not necessarily $\pi$-nondefective). Then $\Span (Z)$ is the projectivisation of the image of the coboundary map $\partial_Z$ of the sequence
\begin{equation} 0 \to H^0 (X, V) \to H^0 ( X, V_Z ) \to H^0 (X, \tau_Z ) \xrightarrow{\partial_Z} H^1 ( X, V) \to H^1 (X, V_Z ) \to 0 . \label{cohomVZ} \end{equation}
\end{proposition}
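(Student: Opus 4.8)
The plan is to identify $\Span(Z)$, as defined in (\ref{defspan}) with $Y = \PP V$ and $\cL = \pi^*\Kx \otimes \Opv(1)$, with the projectivisation of $\Image(\partial_Z)$ by chasing the relationship between the Hilbert-scheme data and the cohomology of the elementary transformation. First I would recall from (\ref{stringofidens}) that $H^0(\PP V, \pi^*\Kx \otimes \Opv(1)) \cong H^0(X, \Kx \otimes V^*) \cong H^1(X,V)^*$, and likewise that $H^0(\PP V, \cI_Z \otimes \pi^*\Kx \otimes \Opv(1)) \cong H^0(X, \pi_*(\cI_Z \otimes \Opvo) \otimes \Kx) = H^0(X, V_Z^* \otimes \Kx)$, using the projection formula together with $R^1\pi_* \Opvo = 0$ (so the higher direct images do not interfere) and the definition $V_Z^* = \pi_*(\cI_Z \otimes \Opvo)$. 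Under these identifications the restriction map $H^0(\PP V, \cL) \to H^0(\PP V, \cI_Z \otimes \cL)$ of (\ref{defspan}) becomes the map $H^0(X, \Kx \otimes V^*) \to H^0(X, \Kx \otimes V_Z^*)$ induced by the inclusion $V_Z^* \hookrightarrow V^*$ of sheaves (equivalently, $\Kx \otimes V_Z^* \hookrightarrow \Kx \otimes V^*$). Hence $\Span(Z) = \PP\Ker\bigl(H^1(X,V)^{**} \to H^0(X,\Kx \otimes V_Z^*)^*\bigr)$, i.e. $\Span(Z)$ is the projectivised annihilator inside $H^1(X,V)$ of the image of $H^0(X, \Kx \otimes V_Z^*) \to H^0(X, \Kx \otimes V^*)$.

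The second step is to dualise the picture via Serre duality applied simultaneously to $V$ and $V_Z$. Tensoring $0 \to V_Z^* \to V^* \to V^*/V_Z^* \to 0$ by $\Kx$ and taking cohomology gives a long exact sequence which, under Serre duality, is the $\K$-dual of (\ref{cohomVZ}): indeed $H^0(X, \Kx \otimes V_Z^*)$ is dual to $H^1(X, V_Z)$, $H^0(X, \Kx \otimes V^*)$ is dual to $H^1(X, V)$, and $H^0(X, \Kx \otimes V^*/V_Z^*) = H^0(X, (\tau_Z)^\vee \otimes \Kx)$ is dual to $H^0(X, \tau_Z)$ (a torsion sheaf is Serre-dual to itself up to the twist, and $H^1$ of a torsion sheaf vanishes). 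The map $H^0(X, \Kx \otimes V_Z^*) \to H^0(X, \Kx \otimes V^*)$ is then precisely the transpose of $\partial_Z \colon H^0(X, \tau_Z) \to H^1(X,V)$ is not quite right — rather, its image is the annihilator of $\Ker(\partial_Z^{\mathrm{transpose}})$; unwinding, the cokernel of $H^0(\Kx \otimes V_Z^*) \to H^0(\Kx \otimes V^*)$ is dual to $\Ker\bigl(H^1(X,V) \to H^1(X, V_Z)\bigr) = \Image(\partial_Z)$. Therefore the image of $H^0(\Kx \otimes V_Z^*) \to H^0(\Kx \otimes V^*) = H^1(X,V)^*$ is exactly the annihilator of $\Image(\partial_Z)$, and taking annihilators once more (finite dimensions) shows $\Span(Z) = \PP\,\Image(\partial_Z)$, as claimed.

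The main obstacle is the careful handling of the natural identifications so that the duality is genuinely functorial: one must check that the restriction map in (\ref{defspan}) on $\PP V$ corresponds under (\ref{stringofidens}) and the projection formula to the sheaf inclusion $V_Z^* \hookrightarrow V^*$ on $X$ twisted by $\Kx$, and that Serre duality intertwines the long exact sequence of $0 \to \Kx \otimes V_Z^* \to \Kx \otimes V^* \to \Kx \otimes (V^*/V_Z^*) \to 0$ with the sequence (\ref{cohomVZ}) compatibly with all the connecting maps — in particular that $\partial_Z$ is (up to sign) the Serre-dual of the sheaf inclusion's induced map on global sections. Once the commuting square of identifications is in place, the conclusion is pure linear algebra: for a linear map $\phi \colon A \to B$ of finite-dimensional spaces, $\Ker(B^* \to (\Image\phi)^*) = (\Image\phi)^{\perp\perp} = \Image\phi$ under the canonical isomorphism. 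I would also note that the hypothesis that $Z$ need not be $\pi$-nondefective causes no trouble here, since (\ref{cohomVZ}) and the sheaf sequence on $X$ make sense and are exact regardless; $\pi$-defectivity only affects the \emph{degree} of $V_Z$, not the validity of this cohomological comparison.
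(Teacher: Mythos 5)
Your proposal is correct and follows essentially the same route as the paper's proof: identify $\Span(Z)$ via the projection formula and $V_Z^* = \pi_*(\cI_Z \otimes \Opvo)$ with the annihilator of $H^0(X,\Kx\otimes V_Z^*)$ in $H^1(X,V) = H^0(X,\Kx\otimes V^*)^*$, then use Serre duality and the exactness of (\ref{cohomVZ}) to recognise this as $\Image(\partial_Z)$. The only loose point is the aside identifying $V^*/V_Z^*$ with ``$(\tau_Z)^\vee\otimes\Kx$'' (it is really $\mathcal{E}xt^1(\tau_Z,\Ox)$), but this is not needed for the argument.
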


\begin{proof} By Serre duality, the map $H^1 ( X, V ) \to H^1 (X, V_Z)$ is identified with
\[ H^0 ( X, \Kx \otimes V^* )^* \ \to \ H^0 (X, \Kx \otimes V_Z^* )^* . \]
By the projection formula and since $V_Z^* = \pi_* ( \cI_Z \otimes \Opvo )$, this becomes in turn
\[ H^0 ( \PP V , \Opvo \otimes \pi^* \Kx )^* \ \to \ H^0 ( \PP V, \cI_Z \otimes \Opvo \otimes \pi^* \Kx )^* . \]
The statement now follows from (\ref{defspan}). \end{proof}

\noindent We give next a generalisation of the geometric Riemann--Roch theorem.

\begin{theorem}[Geometric Riemann--Roch for scrolls] \label{ggrr} Let $0 \to V \to \tV \to \tau \to 0$ be an elementary transformation. Then for any $\pi$-nondefective $Z$ such that $V_Z \cong \tV$ as vector bundles, we have $h^0 (X, \tV ) - h^0 (X, V) = \defe (Z)$. \end{theorem}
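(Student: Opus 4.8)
The plan is to read the statement off from the long exact cohomology sequence (\ref{cohomVZ}) attached to $0 \to V \to V_Z \to \tau_Z \to 0$, combined with the geometric identification of $\Span(Z)$ supplied by Proposition \ref{SpanZIdentification}. Since $V_Z \cong \tV$ as vector bundles we have $h^0(X,\tV) = h^0(X,V_Z)$, so it is enough to compute $h^0(X,V_Z) - h^0(X,V)$ and to match it with $\defe(Z)$.

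First I would extract from (\ref{cohomVZ}) the chain of equalities
\[ h^0(X,V_Z) - h^0(X,V) \ = \ \dim \Ker \partial_Z \ = \ h^0(X,\tau_Z) - \dim \Image \partial_Z , \]
using exactness at $H^0(X,\tau_Z)$. Next, since $Z$ is $\pi$-nondefective, Definition \ref{defPiNondef} gives $\deg(\tau_Z) = \length(Z) = d$; and as $\tau_Z$ is a torsion sheaf on the curve $X$, $h^0(X,\tau_Z) = \length(\tau_Z) = \deg(\tau_Z) = d$. Finally, Proposition \ref{SpanZIdentification} identifies $\Span(Z)$ with $\PP(\Image \partial_Z)$, so $\dim \Image \partial_Z = \dim \Span(Z) + 1$. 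Substituting,
\[ h^0(X,V_Z) - h^0(X,V) \ = \ d - 1 - \dim \Span(Z) \ = \ \length(Z) - 1 - \dim \Span(Z) \ = \ \defe(Z), \]
which is the claim.

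The argument is essentially bookkeeping once Proposition \ref{SpanZIdentification} is available, so there is no serious obstacle; the one point that genuinely uses the hypothesis is the computation $h^0(X,\tau_Z) = \length(Z)$. This is exactly where $\pi$-nondefectivity enters: in general only $\deg(\tau_Z) \le \length(Z)$ holds, and for a $\pi$-defective $Z$ one would get a strictly smaller value of $h^0(X,\tau_Z)$ and hence merely an inequality. It is worth noting that the defect in the statement is computed with respect to the embedding line bundle $\Opvo \otimes \pi^*\Kx$ on $\PP V$ and the map $\psi \colon \PP V \dashrightarrow \PP H^1(X,V)$; after the Serre duality and projection-formula identifications (\ref{stringofidens}) this is precisely the setup of Proposition \ref{SpanZIdentification}, so no compatibility issue arises.
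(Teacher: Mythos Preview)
Your proof is correct and follows essentially the same approach as the paper's: both arguments combine the exact sequence (\ref{cohomVZ}), Proposition \ref{SpanZIdentification}, and the $\pi$-nondefectivity hypothesis to equate $h^0(X,\tau_Z)$ with $\length(Z)$, then rearrange. The paper computes $\dim(\Span(Z))$ first and rearranges to extract $\defe(Z)$, whereas you compute $h^0(X,V_Z)-h^0(X,V)$ first and substitute; these are the same bookkeeping in a different order.
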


\noindent Note that by Theorem \ref{Zexists}, such a $Z$ always exists.

\begin{proof}
Let $Z$ be a zero-dimensional $\pi$-nondefective subscheme of $\PP V$ such that the vector bundles $\tV$ and $V_Z$ are isomorphic. We have:
\begin{align*} \dim (\Span(Z)) \ &= \ \dim (\Image (\partial_Z) ) - 1 \hbox{ by Proposition \ref{SpanZIdentification}} \\
 &= \ h^0 ( X, \tau_Z ) - ( h^0 ( X, \tV ) - h^0 (X, V) ) - 1 \hbox{ by exactness and since $V_Z \cong \tV$} \\
 &= \ \length (Z) - ( h^0 ( X, \tV ) - h^0 (X, V) ) - 1 \hbox{ by $\pi$-nondefectivity.} \end{align*}
Therefore, $( \length (Z) - 1) - \dim ( \Span(Z) ) = h^0 ( X, \tV ) - h^0 (X, V)$. As the expression on the left is exactly $\defe (Z)$, the statement follows. \end{proof}

Note that $Z$ is generally not unique; for example, distinct linearly equivalent effective divisors define isomorphic line bundles. If $V$ is a line bundle, the condition that $V_Z^* = \tV^*$ in $\Quot^{0, d} (V^*)$ uniquely determines $Z$. This is no longer true for $r \ge 2$ (Remark \ref{loun}). We can however give a necessary geometric condition for the equality $V_{Z'}^* = V_Z^*$ in $\Quot^{0, d} (V^*)$.

\begin{proposition} Suppose $Z$ and $Z'$ are such that $V_Z^*$ and $V_{Z'}^*$ define the same point of $\Quot^{0, d} (V^*)$. Then $\Span (Z) = \Span (Z')$. \label{SameSpan} \end{proposition}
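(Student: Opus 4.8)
The plan is to deduce this directly from Proposition \ref{SpanZIdentification}. That proposition identifies $\Span (Z)$ with the projectivisation of the image of the coboundary map $\partial_Z$ in the long exact sequence (\ref{cohomVZ}) attached to the elementary transformation $0 \to V \to V_Z \to \tau_Z \to 0$, and likewise $\Span (Z')$ is $\PP (\Image \partial_{Z'})$ for the analogous sequence of $Z'$. So it will suffice to show that the hypothesis forces these two short exact sequences of sheaves, and hence the maps $\partial_Z$ and $\partial_{Z'}$, to coincide.

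First I would unwind the hypothesis. A point of $\Quot^{0, d} (V^*)$ is the same datum as a coherent subsheaf of $V^*$ with torsion quotient (the kernel of the corresponding surjection), so assuming that $V_Z^*$ and $V_{Z'}^*$ define the same point of $\Quot^{0, d}(V^*)$ means exactly that they are equal as subsheaves of $V^*$. Now the inclusion $V \hookrightarrow V_Z$ appearing in $0 \to V \to V_Z \to \tau_Z \to 0$ is, by the very definition $V_Z = (\pi_* (\cI_Z \otimes \Opvo))^*$, nothing but the dual of the inclusion $V_Z^* = \pi_* (\cI_Z \otimes \Opvo) \hookrightarrow V^*$ of (\ref{defineVZ}). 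Since dualising an inclusion of locally free sheaves with torsion cokernel is an involutive operation — as used tacitly in Remark \ref{QuotTerminology} when switching between $V \subseteq \tV$ and $\tV^* \subseteq V^*$ — equality of the subsheaves $V_Z^*, V_{Z'}^* \subseteq V^*$ yields equality of the subsheaves $V \subseteq V_Z$ and $V \subseteq V_{Z'}$; hence $V_Z = V_{Z'}$, $\tau_Z = V_Z / V = V_{Z'} / V = \tau_{Z'}$, with identical inclusion maps.

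It then follows formally that the two sequences (\ref{cohomVZ}) for $Z$ and $Z'$ are literally the same, so $\partial_Z = \partial_{Z'}$, and in particular $\Image(\partial_Z) = \Image(\partial_{Z'})$; by Proposition \ref{SpanZIdentification}, $\Span (Z) = \PP(\Image \partial_Z) = \PP(\Image \partial_{Z'}) = \Span (Z')$. (Equivalently, from the proof of Proposition \ref{SpanZIdentification} both spans equal $\PP \Ker (H^1 (X, V) \to H^1 (X, V_Z))$, and this map is unchanged.) I do not expect a genuine obstacle here: the statement is essentially a formal consequence of Proposition \ref{SpanZIdentification}. The only point requiring a moment's care is the bookkeeping in the previous paragraph — namely that a point of $\Quot^{0, d}(V^*)$ remembers precisely the subsheaf $V \subseteq V_Z$ (equivalently the subsheaf $V_Z^* \subseteq V^*$), and therefore the entire sequence (\ref{cohomVZ}).
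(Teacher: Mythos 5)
Your proposal is correct and follows essentially the same route as the paper: both arguments reduce to Proposition \ref{SpanZIdentification} together with the observation that a point of $\Quot^{0,d}(V^*)$ determines the subsheaf $V_Z^* \subseteq V^*$, hence (after dualising) the sequence $0 \to V \to V_Z \to \tau_Z \to 0$ and the coboundary $\partial_Z$, so that $\Span(Z) = \PP\Ker\left(H^1(X,V) \to H^1(X,V_Z)\right) = \Span(Z')$. The paper phrases the middle step as a diagram chase showing the two kernels coincide, whereas you note that the two sequences are literally identical; this is only a difference of presentation.
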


\begin{proof} By hypothesis and by definition of the Quot scheme, $V_Z^* = V_{Z'}^*$ as subsheaves of $V^*$. A diagram chasing argument shows that
\begin{equation} \Ker \left( H^1 (X, V) \to H^1 (X, V_Z ) \right) \ = \ \Ker \left( H^1 (X, V) \to H^1 (X, V_{Z'} ) \right) . \label{spanaux} \end{equation}
But by Proposition \ref{SpanZIdentification} and exactness, for any zero-dimensional $Z \subset \PP V$ we have
\[ \Span (Z) \ = \ \PP \Image \left( \Gamma ( \tau_Z ) \to H^1 (X, V) \right) \ = \ \PP \Ker \left( H^1 (X, V) \to H^1 (X, V_Z ) \right) . \]
Putting this together with (\ref{spanaux}), we see that $\Span (Z) = \Span (Z')$.
\end{proof}

\begin{remark} \begin{enumerate}
\renewcommand{\labelenumi}{(\alph{enumi})}
\item Proposition \ref{SameSpan} holds even if $Z$ and $Z'$ are not $\pi$-nondefective, or if they have different lengths. For example, suppose $V$ has rank $2$ and $Z$ is any finite reduced subscheme of length $d \ge 2$ of a fibre $\PP V|_x$. Then $\Span (Z)$ is the fibre $\PP V|_x$ and $V_Z$ is isomorphic to $V \otimes \Ox (x)$, independently of $\length (Z)$.
\item The converse of the proposition does not hold. For example, if $h^1 ( X, V ) = 1$ then $\psi$ is constant.
\end{enumerate} \end{remark}

\begin{remark} Suppose $V = \Ox$, so $\pi \colon \PP V \to X$ is the identity map, and
\[ \psi \colon \PP \Ox \ = \ X \ \to \ \PP H^1 (X, \Ox) \ = \ | \Kx |^* \]
is the canonical map. Then $\Hilb^d X = \Sym^d X$ parametrises effective divisors of degree $d$ on $X$. Trivially, all such $Z$ are $\pi$-nondefective, and we obtain the usual geometric Riemann--Roch theorem. \end{remark}

\noindent Before making the next remark, and in view of the situation to be studied in {\S} \ref{TangentConeBN}, for completeness we recall the definition of a stable vector bundle.

\begin{definition} A vector bundle $V \to X$ is said to be \textsl{stable} if for each proper subbundle $W \subset V$ we have $\frac{\deg(W)}{\rank(W)} < \frac{\deg(V)}{\rank(V)}$. \end{definition}

\noindent As is well known, stability is an open condition on families of vector bundles of fixed rank and degree.

\begin{remark} (A generalised Abel--Jacobi map) For $d \ge 1$, let $\alpha_d \colon \Sym^d X \to \Pic^d (X)$ be the Abel--Jacobi map $D \mapsto \Ox ( D )$. As was classically known, $\alpha_d^{-1} ( L )$ is exactly the linear series $|L|$. More generally; fix a bundle $V$ of rank $r$ and degree $e-d$. Sending a length $d$ subscheme $Z \subset \PP V$ to the moduli point of $V_Z$ in $\ure$ defines a rational map
\[ \alpha_{V, d} \colon \Hilb^d ( \PP V ) \ \dashrightarrow \ \ure , \]
generalising the Abel--Jacobi map. In particular, if $V = \Ox^{\oplus r}$ then, as in the rank one case, the image of $\alpha_{V, d}$ is exactly the locus of stable, generically generated bundles. Moreover, by Theorem \ref{ggrr}, the preimage of
\[ \{ E \in \urd : E \hbox{ is generically generated and } h^0 (X, E) \ge r+1 \} \ \subseteq B^{r+1}_{r,d} \]
is exactly the locus
\[ \left\{ Z \in \Hilb^d ( \Ox^{\oplus r} ) : \psi ( Z ) \hbox{ is defective in } \PP H^1 ( \Ox^{\oplus r} ) \hbox{ and } \left( \Ox^{\oplus r} \right)_Z \hbox{ is stable} \right\} . \]
For $r \ge 2$, the situation is complicated by the requirement of stability (but see \cite{BBPN2}) and the presence of nontrivial automorphisms of $X \times \PP^r$. Nonetheless, viewing sums of points on $X$ as \emph{zero-dimensional subschemes} (as opposed to \emph{codimension one subschemes}), Theorems \ref{surj} (a) and \ref{ggrr} give a natural generalisation of the picture for line bundles and linear series on $X$ to bundles of higher rank. \end{remark}

\subsection*{A relative version}

For applications to Brill--Noether loci, we will need a more general version of Theorem \ref{ggrr}. Let $V$ and $F$ be vector bundles over $X$. For any $Z \in \Hilb^d ( \PP V )$, we have an exact sequence
\[ 0 \ \to \ V \otimes F \ \to \ V_Z \otimes F \ \to \ \tau_Z \otimes F \ \to \ 0 . \]
Inside the scroll $\PP ( V \otimes F )$ we have the rank one locus $\Delta := \PP V \times_X \PP F$, also called the \textsl{decomposable locus}. There is a commutative diagram
\begin{equation} \xymatrix{ \Delta \ar[r]^\tomega \ar[d]_\tpi & \PP V \ar[d]^\pi \\
 \PP F \ar[r]^\omega & X. } \label{DeltaMaps} \end{equation}
Since $\PP F \to X$ is flat, for any closed subscheme $Z \subset \PP V$ we have $\cI_{Z \times_X \PP F} \ = \ \tomega^* \cI_Z$. Thus if $\cL \to \Delta$ is a line bundle and $\psi \colon \Delta \dashrightarrow | \cL |^*$ the associated map, in view of (\ref{defspan}) we obtain
\begin{equation} \Span ( \psi ( Z \times_X \PP F ) ) \ = \ \PP \Ker \left( H^0 ( \Delta, \cL )^* \ \to \ H^0 ( \Delta , \tomega^* \cI_Z \otimes \cL )^* \right) . \label{reldefspan} \end{equation}
Note that $\Delta$ is just the projective bundle $\PP (\pi^* F) \to \PP V$. If $Z \in \Hilb^d ( \PP V )$ is reduced, $Z \times_X \PP F$ is a union of $d$ fibres of $\PP F$. 
 In general, the expected dimension of $\Span ( Z \times_X \PP F )$ is $\rank (F) \cdot \length (Z) - 1$.

\begin{definition} \label{RelativeDefect} For $V$, $Z$, $F$ and $\psi$ as above, the \textsl{defect} of $\psi( Z \times_X \PP F)$ is
\[ \defe ( Z \times_X \PP F ) \ := \ (\rank ( F ) \cdot \length (Z) - 1 ) - \dim ( \Span ( Z \times_X \PP F) ) . \]
\end{definition}

\noindent Now we can generalise the previous results of this section. We write $\cL$ for the line bundle
\[ (\tpi^* \omega^* \Kx) \otimes ( \tpi^* \Opfo ) \otimes ( \tomega^* \Opvo ) \ \to \ \Delta . \]

\begin{theorem} \label{relggrr} \quad \begin{enumerate}
\item[(a)] There is a natural identification $H^1 ( X, V \otimes F ) \ \isom \ H^0 ( \Delta , \cL )^*$. In particular, there is a natural map $\Delta \dashrightarrow \PP H^1 ( X, V \otimes F )$, which we again denote $\psi$.
\item[(b)] Via the above identification, $\Span ( Z \times_X \PP F )$ coincides with the projectivised image of the coboundary map $\partial_Z$ in the sequence
\begin{multline} 0 \ \to \ H^0 ( X, V \otimes F ) \ \to \ H^0 ( X, V_Z \otimes F ) \ \to \ H^0 ( X, \tau_Z \otimes F ) \\
 \xrightarrow{\partial_Z} \ H^1 ( X, V \otimes F ) \ \to \ H^1 ( X, V_Z \otimes F ) \ \to \ 0 . \label{relSpanZIdentif} \end{multline}
\item[(c)] (Relative generalised geometric Riemann--Roch) For any $\pi$-nondefective $Z$ such that $V_Z \cong \tV$ as vector bundles, we have
\[ h^0 ( X, \tV \otimes F ) - h^0 ( X, V \otimes F ) \ = \ \defe \left( \psi \left( Z \times_X \PP F \right) \right). \]
\item[(d)] Suppose $Z$ and $Z'$ are such that $V_Z^*$ and $V_{Z'}^*$ define the same point of $\Quot^{0, d}(V^*)$. Then $\Span (Z \times_X \PP F ) = \Span (Z' \times_X \PP F)$. \end{enumerate} \end{theorem}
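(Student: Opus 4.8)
The plan is to prove all four parts of Theorem~\ref{relggrr} in parallel, by reducing everything to the non-relative case via the projection formula applied to $\tpi \colon \Delta \to \PP F$ and $\tomega \colon \Delta \to \PP V$, exactly as in the proofs of Proposition~\ref{SpanZIdentification} and Theorem~\ref{ggrr}. For part (a), I would start from Serre duality $H^1(X, V \otimes F) \cong H^0(X, \Kx \otimes V^* \otimes F^*)^*$ and then rewrite the right-hand side as global sections of a line bundle on $\Delta$. Since $\Delta = \PP(\pi^* F) \to \PP V$ carries $\cO_{\PP(\pi^*F)}(1)$ with $\tomega_* \cO_{\PP(\pi^*F)}(1) = \pi^* F^*$ (using the convention under which $\pi_* \Opvo = V^*$), the projection formula gives $H^0(\PP V, \Opvo \otimes \pi^* \Kx \otimes \pi^* F^*) \cong H^0(\Delta, \tomega^* \Opvo \otimes \tpi^* \Opfo \otimes \tpi^* \omega^* \Kx) = H^0(\Delta, \cL)$; dualizing yields the claimed identification and hence the map $\psi \colon \Delta \dashrightarrow \PP H^1(X, V \otimes F)$.

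For part (b), I would run the Serre-duality argument of Proposition~\ref{SpanZIdentification} one level up. The map $H^1(X, V \otimes F) \to H^1(X, V_Z \otimes F)$ is dual to $H^0(X, \Kx \otimes V^* \otimes F^*) \to H^0(X, \Kx \otimes V_Z^* \otimes F^*)$; using $V_Z^* = \pi_*(\cI_Z \otimes \Opvo)$ and the projection formula twice (down to $\PP V$, then down to $\Delta$, invoking $\cI_{Z \times_X \PP F} = \tomega^* \cI_Z$ from the flatness of $\PP F \to X$), this becomes the restriction map $H^0(\Delta, \cL)^* \gets H^0(\Delta, \tomega^* \cI_Z \otimes \cL)^*$ dualized. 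Comparing with the formula \eqref{reldefspan} for $\Span(\psi(Z \times_X \PP F))$ and using exactness of \eqref{relSpanZIdentif} to identify $\Image(\partial_Z) = \Ker(H^1(X, V\otimes F) \to H^1(X, V_Z \otimes F))$ gives the statement. Part (c) is then a formal consequence: from (b), $\dim \Span(Z \times_X \PP F) = \dim \Image(\partial_Z) - 1 = h^0(X, \tau_Z \otimes F) - (h^0(X, \tV \otimes F) - h^0(X, V \otimes F)) - 1$ by exactness of \eqref{relSpanZIdentif} and $V_Z \cong \tV$; and $\pi$-nondefectivity of $Z$ gives $\deg \tau_Z = \length(Z)$, hence (since $\tau_Z$ is a torsion sheaf on $X$ and $F$ is locally free of rank $\rank F$) $h^0(X, \tau_Z \otimes F) = \rank(F)\cdot\length(Z)$. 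Substituting into Definition~\ref{RelativeDefect} yields $\defe(\psi(Z \times_X \PP F)) = \rank(F)\length(Z) - 1 - \dim\Span(Z\times_X\PP F) = h^0(X, \tV \otimes F) - h^0(X, V \otimes F)$. Part (d) copies the diagram-chase of Proposition~\ref{SameSpan} verbatim, tensoring the two elementary transformations of $V^*$ with $F^*$ (or equivalently working with $V_Z \otimes F$, $V_{Z'} \otimes F$), since $V_Z^* = V_{Z'}^*$ as subsheaves of $V^*$ implies $V_Z^* \otimes F^* = V_{Z'}^* \otimes F^*$ as subsheaves of $V^* \otimes F^*$; the isomorphism $a$ between the two quotient sheaves persists after tensoring, so the kernels of $H^1(X, V \otimes F) \to H^1(X, V_Z \otimes F)$ and $H^1(X, V \otimes F) \to H^1(X, V_{Z'} \otimes F)$ agree, and (b) converts this into equality of spans.

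The main obstacle is bookkeeping with the line-bundle conventions on the nested projective bundles: one must be careful that the relative $\cO(1)$ on $\Delta = \PP(\pi^* F) \to \PP V$ pushes forward correctly (i.e.\ whether $\tomega_* \cO_{\PP(\pi^*F)}(1)$ is $\pi^* F$ or $\pi^* F^*$), and that this is compatible with the identification $\pi_* \Opvo = V^*$ fixed at the start of \S\ref{eltranssubschemes}, so that the composite really produces the asserted $\cL = (\tpi^*\omega^*\Kx) \otimes (\tpi^*\Opfo) \otimes (\tomega^*\Opvo)$ rather than some twist or dual of it. Once the conventions are pinned down, everything is a mechanical application of the projection formula and the arguments already given in the non-relative case. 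One subsidiary point worth checking is that the sequence \eqref{relSpanZIdentif} is exact and that $H^1(X, \tau_Z \otimes F) = 0$ (automatic, as $\tau_Z \otimes F$ is a torsion sheaf on the curve $X$), so that the five-term sequence genuinely terminates with the surjection onto $H^1(X, V_Z \otimes F)$; this is needed for the dimension count in (c).
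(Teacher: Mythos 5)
Your proposal is correct and follows essentially the same route as the paper: Serre duality plus repeated projection formula (the paper phrases your $\tomega_* \tpi^* \Opfo = \pi^* F^*$ as the flat base-change isomorphism $\pi^* \omega_* \cS \isom \tomega_* \tpi^* \cS$), the identification of $H^0(X, \Kx \otimes F^* \otimes V_Z^*)$ with $H^0(\Delta, \cL \otimes \tomega^* \cI_Z)$ for (b), and then (c) and (d) exactly as in Theorem \ref{ggrr} and Proposition \ref{SameSpan} after tensoring with $F$. Your convention check on the nested $\cO(1)$'s and the observation that $h^0(X, \tau_Z \otimes F) = \rank(F) \cdot \length(Z)$ under $\pi$-nondefectivity are precisely the points the paper relies on.
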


\begin{proof} (a) This is a technical but straightforward computation. By Serre duality and repeated use of the projection formula, we get an identification
\begin{equation} H^1 ( X, V \otimes F ) \ \isom \ H^0 \left( \PP V , \pi^* \left\{ \omega_* \left( \omega^* \Kx \otimes \Opfo \right) \right\} \otimes \Opvo \right)^* . \label{auxident} \end{equation}
As $\pi \colon \PP V \to X$ is flat, by \cite[Proposition III.9.3]{Har} there is a canonical isomorphism
\[ \pi^* ( \omega_* \cS ) \ \isom \ \tomega_* ( \tpi^* \cS ) \]
for any coherent sheaf $\cS$ on $\PP F$. Setting $\cS = \omega^* \Kx \otimes \Opfo$ in (\ref{auxident}), we obtain
\[ H^0 \left( \PP V , \tomega_* \left\{ \tpi^* \left( \omega^* \Kx \otimes \Opfo \right) \right\} \otimes \Opvo \right)^* . \]
Using the projection formula, we see easily that this is identified with
\[ H^0 \left( \Delta , (\tpi^* \omega^* \Kx) \otimes ( \tpi^* \Opfo ) \otimes ( \tomega^* \Opvo ) \right)^*, \]
which is exactly $H^0 ( \Delta, \cL )^*$.

(b) A calculation similar to that in (a) shows that via the identification in (a), the subspace
\[ H^1 ( X, V_Z \otimes F )^* \ \cong \ H^0 ( X, \Kx \otimes F^* \otimes V_Z^* ) \ \subset \ H^0 ( X, \Kx \otimes F^* \otimes V^* ) \]
is identified with $H^0 \left( \Delta , \cL \otimes \tomega^* \cI_Z \right) \subseteq \ H^0 ( \Delta , \cL )$. Then the statement follows from (\ref{reldefspan}).

Parts (c) and (d) can be proven exactly as Theorem \ref{ggrr} and Proposition \ref{SameSpan} respectively. \end{proof}

\begin{remark} \label{Brivio} The idea behind Theorem \ref{relggrr} (b)--(c) is present in a recent work of Brivio \cite{Br17}. Let $E$ be a general bundle in $U_X (r, r(2g-1))$ and $D = x_1 + \cdots + x_g$ an effective divisor of degree $g$. The condition of interest in \cite{Br17} is that $h^0 (X, E(-D)) = 1$. As $\chi (X, E(-D) ) = 0$, this is equivalent to $h^0 ( X, \Kx \otimes E^*(D)) = 1$. These spaces appear in the cohomology sequence
\begin{multline*} 0 \ \to \ H^0 ( X, \Kx \otimes E^* ) \ \to \ H^0 ( X, \Kx \otimes E^*(D) ) \ \to \ H^0 (X, \Kx \otimes E^*(D)|_D ) \\
 \to \ H^0 ( X, E )^* \ \to \ H^0 ( X, E(-D) )^* \ \to \ 0 . \end{multline*}
This is exactly (\ref{relSpanZIdentif}) with $V = \Kx(-D)$ and $Z = D$, and $F = E^*$. Then by Theorem \ref{relggrr} (b), we can interpret the coboundary map as the restriction of the \emph{tautological model} of \cite{Br17} to the fibres of $\PP ( \Kx \otimes E^* ) \cong \PP E^*$ along $D$. (Note that $\PP E^*$ is denoted by $\PP E$ in \cite{Br17}.) Then Theorem \ref{relggrr} (c) gives another proof that $h^0 ( X, E(-D) ) = h^0 (X, \Kx \otimes E^*(D)) = 1$ if and only if the fibres $\PP E^*|_{x_1} , \ldots , \PP E^*|_{x_g}$ span a space of dimension one less than expected; equivalently, that there exist
\[ ( \eta_1 , \ldots \eta_g ) \ \in \ \PP E^*|_{x_1} \times \cdots \times \PP E^*|_{x_g} \]
which are linearly dependent in $\PP H^0 ( X, E)^*$. (Compare with \cite[Lemma 5.1 and Proposition 7.2]{Br17}). \end{remark}

\section{Tangent cones of higher rank Brill--Noether loci} \label{TangentConeBN}

Suppose $L \to X$ is an effective line bundle of degree $d \le g$. Then $L$ defines a point of the Brill--Noether locus
\[ W_d \ = \ \{ L \in \Pic^d (X) : h^0 ( X, L) \ge 1 \} . \]
The projectivised tangent cone $\TT_L W_d$ at $L$ belongs to
\[ \PP T_L \Pic^d (X) \ = \ \PP H^1 ( X, \Ox ) \ = \ |\Kx|^* . \]
The \textsl{Riemann--Kempf singularity theorem} (see \cite[Chapter 2]{GrHa}) states that
\[ \TT_L W_d \ = \ \bigcup_{L \in |D|} \Span \left( \phi_{\Kx} ( D ) \right) \]
where $\phi_{\Kx}$ is the canonical map. We will generalise this picture to bundles of higher rank, using Theorems \ref{surj} and \ref{ggrr}.

\subsection{Higher rank Brill--Noether loci} Here we recall briefly some essential facts, referring the reader to \cite{GT} and \cite{CT} for details. The moduli space $\urd$ of stable bundles of rank $r$ and degree $d$ over $X$ is an irreducible quasi-projective variety of dimension $r^2 (g-1)+1$. The \textsl{Brill--Noether locus} $B^k_{r, d}$ is defined set-theoretically by
\[ B^k_{r,d} \ = \ \{ E \in \urd : h^0 (X, E) \ge k \} . \]
This is a determinantal subvariety of $\urd$, with expected codimension $k (k - d + r(g-1) )$, when this is nonnegative. Suppose $E \in \urd$ satisfies $h^0 ( X, E) = k$. Then the Zariski tangent space $T_E B^k_{r,d}$ is exactly $\Image ( \mu )^\perp$, where $\mu$ is the \textsl{Petri map}
\[ H^0 ( X, E ) \otimes H^0 ( X, \Kx \otimes E^* ) \ \to \ H^0 ( X, \Kx \otimes \End E ) . \]
Equivalently, via Serre duality we have
\[ T_E B^k_{r,d} \ = \ \Ker \left( \cup \colon H^1 ( X, \End E ) \to \Hom \left( H^0 (X, E) , H^1 (X, E) \right) \right) . \]
Thus $B^k_{r,d}$ is smooth and of the expected dimension at $E$ if and only if $\mu$ is injective; equivalently, if $\cup$ is surjective.

More generally, suppose $h^0 ( X, E ) = k \ge m \ge 1$. We denote $\Gr(m, H^0 ( X, E))$ simply by $\Gr$ to ease notation. Write $\cU$ for the universal bundle over $\Gr$, and consider the diagram
\[ \xymatrix{ \cO_{\Gr} \otimes H^1 ( X, \End E ) \ar[r]^-\cup \ar[dr]_-{\tcup} & \cO_{\Gr} \otimes \Hom \left( H^0 ( X, E ) , H^1 ( X, E ) \right) \ar[d] \\
 & \Hom \left( \cU , \cO_{\Gr} \otimes H^1 ( X, E ) \right) } . \]

\begin{definition} \label{PetriRinj} (cf. \cite[{\S} 2]{CT}) Let $E$ be as above. If the restricted Petri map
\[ \mu_\Lambda \ := \ \mu|_{\Lambda \otimes H^0 ( X, \Kx \otimes E^* )} \]
is injective for all subspaces $\Lambda \in \Gr( m , H^0 ( X, E ))$, then $E$ is said to be \textsl{Petri $m$-injective}. Equivalently, $E$ is Petri $m$-injective if and only if
\begin{equation} \tcup|_\Lambda \ \colon H^1 ( X , \End E ) \ \to \ \Hom \left( \Lambda , H^1 ( X, E ) \right) \label{tcupLambda} \end{equation}
is surjective for all $\Lambda \in \Gr$. \end{definition}

If $E$ is Petri $m$-injective, then $\Ker ( \tcup )$ is a vector subbundle of $\cO_{\Gr} \otimes H^1 ( X, \End E)$. By 
 \cite[Theorem 2.4 (4)]{CT}, the projectivised tangent cone $\TT_E B^m_{r, d}$ is given by
\[ \TT_E B^m_{r, d} \ = \ \bigcup_{\Lambda \in \Gr} \Image ( \mu_\Lambda )^\perp \ = \ \bigcup_{\Lambda \in \Gr} \Ker ( \tcup|_\Lambda ) , \]
the second equality following from Serre duality as above. Thus, in summary we obtain:

\begin{lemma} Suppose $E$ is a stable bundle with $h^0 ( X, E ) = k \ge m \ge 1$ which is Petri $m$-injective. Then $\TT_E B^m_{r,d}$ is the image of the scroll
\begin{equation} S \ := \ \PP \Ker ( \tcup ) \ \subseteq \ \Gr \times \PP H^1 ( X, \End E ) \label{S} \end{equation}
by the projection to $\PP H^1 ( X, \End E )$. In particular, the tangent cone is closed and irreducible. \label{TTdescr} \end{lemma}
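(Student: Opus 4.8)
The plan is to read off the statement from the description of the tangent cone already recorded above. Recall that, by \cite[Theorem 2.4 (4)]{CT} together with Serre duality (exactly as in the discussion preceding the lemma), one has, as a subset of $\PP H^1(X,\End E)$,
\[ \TT_E B^m_{r,d} \ = \ \bigcup_{\Lambda \in \Gr} \PP\Ker(\tcup|_\Lambda), \]
where $\PP\Ker(\tcup|_\Lambda)$ denotes the projectivisation of the linear subspace $\Ker(\tcup|_\Lambda) \subseteq H^1(X,\End E)$. So the only thing to do is to recognise the right-hand side as the image of the projection $p \colon S \to \PP H^1(X,\End E)$, and then to extract the topological consequences.

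First I would make the fibrewise identification. Since $E$ is Petri $m$-injective, $\Ker(\tcup)$ is (as noted above) a vector subbundle of the trivial bundle $\cO_{\Gr}\otimes H^1(X,\End E)$; its fibre over a point $\Lambda \in \Gr$ is precisely $\Ker(\tcup|_\Lambda) \subseteq H^1(X,\End E)$, because the restriction of $\tcup$ to the fibre over $\Lambda$ is by construction the map $\tcup|_\Lambda$ of (\ref{tcupLambda}). Projectivising fibrewise, the fibre of $S = \PP\Ker(\tcup) \subseteq \Gr \times \PP H^1(X,\End E)$ over $\Lambda$ is $\PP\Ker(\tcup|_\Lambda)$. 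Hence the image $p(S)$ is exactly $\bigcup_{\Lambda\in\Gr}\PP\Ker(\tcup|_\Lambda)$, which by the displayed identity equals $\TT_E B^m_{r,d}$. This proves the first assertion.

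It then remains to deduce closedness and irreducibility. The Grassmannian $\Gr = \Gr(m,H^0(X,E))$ is an irreducible projective variety, and $S$, being a projective bundle over $\Gr$ (here one uses again that $\Ker(\tcup)$ is locally free), is itself irreducible and projective. The projection $p$ is therefore a morphism from a complete variety, so $p(S) = \TT_E B^m_{r,d}$ is closed in $\PP H^1(X,\End E)$; and the image of an irreducible topological space under a continuous map is irreducible. This finishes the proof.

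As for difficulty: there is essentially no obstacle once \cite[Theorem 2.4 (4)]{CT} and the subbundle property of $\Ker(\tcup)$ are granted, since the remaining argument is purely formal. The one place where the Petri $m$-injectivity hypothesis is genuinely needed, beyond what \cite{CT} already provides, is to ensure that $\Ker(\tcup)$ is locally free; this is what makes $S$ a genuine projective bundle, hence irreducible, so that the a priori possibly reducible union $\bigcup_{\Lambda}\PP\Ker(\tcup|_\Lambda)$ is seen to be irreducible.
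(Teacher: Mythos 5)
Your proposal is correct and follows essentially the same route as the paper, which presents the lemma as an immediate summary of the preceding facts: the description $\TT_E B^m_{r,d} = \bigcup_{\Lambda}\Ker(\tcup|_\Lambda)$ from \cite[Theorem 2.4 (4)]{CT} plus Serre duality, and the subbundle property of $\Ker(\tcup)$ guaranteed by Petri $m$-injectivity. Your explicit derivation of closedness (properness of $S$) and irreducibility (continuous image of an irreducible space) correctly fills in the details the paper leaves implicit.
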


\noindent Note that as all fibres of $S$ contain the subspace
\[ \PP \Ker \left( \cup \colon H^1 ( X, \End E ) \to \Hom ( H^0 ( X, E), H^1 ( X, E ) \right) \ = \ T_E B^k_{r, d} , \]
the map $S \to \PP H^1 (X, \End E )$ is an embedding only if $\cup$ is injective; equivalently, if $\mu$ is surjective.

\subsection{Riemann--Kempf for generically generated bundles} Throughout this section, $E$ will be a stable bundle with $h^0 ( X, E ) = k \ge r$ and which is generically generated and Petri $r$-injective. Setting $V = E^*$ and $F = E$, we consider the map $\psi \colon \PP \End E \dashrightarrow \PP H^1 ( X, \End E )$ as defined in {\S} \ref{grr}. Although we will not require this fact, we note that by \cite[Theorem 3.1]{HR}, this is an embedding for general $E$ and $X$ if $g \ge 5$.

We now take $m = r$ and write $\Gr := \Gr ( r, H^0 ( X, E) )$. Set
\[ U \ := \ \{ \Lambda \in \Gr : \ev_\Lambda \colon \Ox \otimes \Lambda \to E \hbox{ is generically injective} \} . \]
This is an open subset of $\Gr$, which by hypothesis is dense.
For each $\Lambda \in U$, transposing the evaluation map $\Ox \otimes \Lambda \to E$, we obtain
\[ 0 \to E^* \to \Ox \otimes \Lambda^* \to \tau \to 0 \]
where $\tau$ is a torsion sheaf of degree $d$, supported along a divisor in $|\det E|$. Hence, by Theorem \ref{Zexists}, there exists $Z_\Lambda \in \Hilb^d ( \PP E^* )$ such that the elementary transformation $\Ox \otimes \nolinebreak \Lambda^*$ coincides with $(E^*)_{Z_\Lambda}$ as points of $\Quot^{0, d} ( E )$ (cf{.} Remark \ref{QuotTerminology}). Moreover, there is a commutative diagram
\[ \xymatrix{ & H^1 ( X, \End E) \ar[dl] \ar[dr]^{\tcup|_\Lambda} & \\
H^1 (X, \Lambda^* \otimes E) \ar[rr]^\sim_\cup & & \Hom ( \Lambda , H^1 (X, E)) } , \]
whence $\Ker ( \tcup|_\Lambda ) \ = \ \Ker \left( H^1 ( X, \End E ) \ \to \ H^1 ( X, \Lambda^* \otimes E ) \right)$. 
 Therefore, by exactness and by Theorem \ref{relggrr} (b), with
\[ V \ = \ E^* \quad \hbox{and} \quad V_Z \ = \ (E^*)_{Z_\Lambda} \ = \ \Ox \otimes \Lambda^* \quad \hbox{and} \quad F \ = \ E , \]
we have
\begin{equation} \PP \Ker ( \tcup|_\Lambda ) \ = \ \Span ( Z \times_X \PP E ) . \label{BNspanIdentif} \end{equation}
Note that although $Z_\Lambda$ may not be unique, $\Span ( Z \times_X \PP E )$ is independent of the choice of $Z_\Lambda$ by Theorem \ref{relggrr} (d).

We consider a useful special case. Set
\[ U_1 \ := \ \{ \Lambda \in U : \Supp ( \tau_\Lambda ) \hbox{ is reduced} \} . \]
Clearly this is an open subset of $U$. 
 By an argument using Bertini's theorem, the locus $U_1$ is nonempty for example if $E$ is globally generated.
 If $\Lambda \in U_1$, then $Z_\Lambda$ is reduced and uniquely determined, by Theorem \ref{Zexists} (b). Explicitly, $Z_\Lambda = \{ \nu_1 , \ldots , \nu_d \}$ where $\nu_i = \Ker ( E^*|_{x_i} \to \K^r )$ for distinct points $x_1 , \ldots , x_d \in X$. Equivalently, $\nu_i \in E^*|_{x_i}$ defines the hyperplane $\Image \left( \Lambda \to E|_{x_i} \right)$. In this case,
\begin{equation} Z_{\Lambda} \times_X \PP E \ = \ \bigcup_{i=1}^d \PP ( \nu_i \otimes E|_{x_i} ) \label{UoneZxPF} \end{equation}


\begin{theorem}[Generalised Riemann--Kempf singularity] \label{genRKS} Suppose $E$ is a stable bundle of degree $d < rg$ with $h^0 (X, E) = k \ge r$ which is Petri $r$-injective and generically generated.
\begin{enumerate}
\item[(a)] For any $\Lambda \in U$, we have $\defe ( Z_{\Lambda} \times_X \PP E ) \ = \ kr - 1$.
\item[(b)] The tangent cone to $B^r_{r,d}$ at $E$ is the Zariski closure of
\begin{equation} \bigcup_{\Lambda \in U} \Span \left( Z_\Lambda \times_X \PP E \right) . \label{U} \end{equation}
\item[(c)] Suppose $U_1$ is nonempty (for example, if $E$ is globally generated). Then the tangent cone to $B^r_{r, d}$ at $E$ is the Zariski closure of
\begin{equation} \bigcup_{\Lambda \in U_1} \Span \left( \bigcup_{\nu \in Z_\Lambda} \PP \left( \nu \otimes E|_{\pi(\nu)} \right) \right) . \label{Uone} \end{equation}
\end{enumerate} \end{theorem}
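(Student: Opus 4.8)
\textbf{Proof plan for Theorem \ref{genRKS}.}

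The strategy is to combine Lemma \ref{TTdescr}, which presents $\TT_E B^r_{r,d}$ as the image of the scroll $S = \PP\Ker(\tcup)$ over $\Gr$, with the fibrewise identification \eqref{BNspanIdentif} relating $\PP\Ker(\tcup|_\Lambda)$ to a relative span $\Span(Z_\Lambda \times_X \PP E)$. First I would prove (a): for $\Lambda \in U$ the elementary transformation $(E^*)_{Z_\Lambda} = \Ox \otimes \Lambda^*$ is globally generated with $h^0 = kr$ (it is a direct sum of $r$ trivial line bundles, so $h^0(X, (E^*)_{Z_\Lambda} \otimes E) = h^0(X, E^{\oplus r}) = kr$), whereas $h^0(X, E^* \otimes E) = h^0(X,\End E)$ need not obviously be zero — but stability of $E$ forces $h^0(X,\End E) = 1$ only if $E$ is simple, which follows from stability; hence $h^0(X, V \otimes F) = h^0(X,\End E) = 1$. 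Wait: the relevant term in the relative geometric Riemann--Roch (Theorem \ref{relggrr}(c)) is $h^0(X, \tV \otimes F) - h^0(X, V \otimes F)$ with $V = E^*$, $F = E$, $\tV = \Ox\otimes\Lambda^*$; this equals $kr - 1$ since $\End E$ is simple. By Theorem \ref{relggrr}(c) this difference is exactly $\defe(\psi(Z_\Lambda \times_X \PP E))$, giving (a). One should check that $Z_\Lambda$ is $\pi$-nondefective: this holds by Remark (a) after Definition \ref{defPiNondef}, since $Z_\Lambda$ arises from the construction in Theorem \ref{Zexists}.

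Next, for (b): by \eqref{BNspanIdentif}, $\Span(Z_\Lambda \times_X \PP E) = \PP\Ker(\tcup|_\Lambda)$ for every $\Lambda \in U$, and by Theorem \ref{relggrr}(d) the left side is independent of the (possibly non-unique) choice of $Z_\Lambda$. Since $U$ is dense open in $\Gr$, and $S \to \PP H^1(X,\End E)$ has closed irreducible image equal to $\TT_E B^r_{r,d}$ (Lemma \ref{TTdescr}), the image of $S|_U$ is dense in $\TT_E B^r_{r,d}$; but that image is exactly $\bigcup_{\Lambda \in U}\PP\Ker(\tcup|_\Lambda) = \bigcup_{\Lambda\in U}\Span(Z_\Lambda\times_X\PP E)$. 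Taking Zariski closures yields (b). Finally (c) follows from (b): $U_1 \subset U$ is open and, when nonempty, dense in $\Gr$ (the same scroll-image density argument applies with $U_1$ in place of $U$, since $S|_{U_1}$ still dominates); and for $\Lambda \in U_1$ the description \eqref{UoneZxPF} identifies $Z_\Lambda \times_X \PP E$ with $\bigcup_{i=1}^d \PP(\nu_i \otimes E|_{x_i})$, which in the notation of \eqref{Uone} is $\bigcup_{\nu \in Z_\Lambda}\PP(\nu \otimes E|_{\pi(\nu)})$. The nonemptiness of $U_1$ when $E$ is globally generated is the Bertini argument sketched in the excerpt's commented-out passage, which I would reinstate: pick a globally generating subspace $\Pi$ of dimension $r+1$, observe $\{\Supp(\tau_\Lambda) : \Lambda \in \Gr(r,\Pi)\}$ is a base-point-free $g^r_d$ in $|\det E|$, and apply Bertini.

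The main obstacle is twofold. First, one must be careful that the density of $S|_U$ (or $S|_{U_1}$) inside $S$ genuinely gives density of the \emph{image} inside $\TT_E B^r_{r,d}$: this is fine because $S$ is irreducible (being a projective subbundle of a trivial bundle over the irreducible $\Gr$, by Petri $r$-injectivity) and the restriction of a dominant — indeed surjective — morphism from an irreducible variety to a dense open subset of the source still has dense image. Second, and more delicate, is the bookkeeping identifying the coboundary map of \eqref{cohomVZ} (tensored with $F = E$) with the restricted cup-product map $\tcup|_\Lambda$: this rests on the commutative triangle displayed just before \eqref{BNspanIdentif}, whose commutativity is the statement that the Serre-dual of multiplication by a section is the corresponding coboundary; I would either cite \cite{CT} for this or verify it directly by unwinding Serre duality on $\PP E^*$ versus $X$. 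Everything else is a formal consequence of the results already established in \S\S\ref{eltranssubschemes}--\ref{grr}.
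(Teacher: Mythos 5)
Your proposal is correct and follows essentially the same route as the paper: part (a) via Theorem \ref{relggrr}(c) with $h^0(X,\Lambda^*\otimes E)=kr$ and $h^0(X,\End E)=1$ by simplicity, and parts (b), (c) by combining the identification \eqref{BNspanIdentif} (resp.\ \eqref{UoneZxPF}) with Lemma \ref{TTdescr} and the density of $S|_U$ (resp.\ $S|_{U_1}$) in the irreducible scroll $S$. The extra points you flag — $\pi$-nondefectivity of $Z_\Lambda$, independence of the choice of $Z_\Lambda$ via Theorem \ref{relggrr}(d), and the Bertini argument for $U_1\neq\emptyset$ — are exactly the supporting facts the paper establishes in the discussion preceding the theorem.
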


\noindent Note that the hypothesis $d < rg$ ensures that $B^r_{r,d}$ is a proper sublocus of $\urd$.

\begin{proof} (a) By Theorem \ref{relggrr} (c), we have
\[ \defe \left( Z_{\Lambda} \times_X \PP E \right) \ = \ h^0 ( X, \Lambda^* \otimes E ) - h^0 ( X, E^* \otimes E) \ = kr - 1 . \]

(b) By (\ref{BNspanIdentif}), we see that the locus (\ref{U}) is the image of the dense open subset $S|_U$, where $S$ is the scroll defined in (\ref{S}). By Petri $r$-injectivity and Lemma \ref{TTdescr}, the tangent cone is closed and irreducible. Hence by a topological argument it is exactly the closure of (\ref{U}). 

(c) In view of (\ref{UoneZxPF}), the locus (\ref{Uone}) is the image of the dense open subset $S|_{U_1}$. As in part (b), the closure of (\ref{Uone}) is the tangent cone. \end{proof}

\begin{remark} The hypothesis of Petri $r$-injectivity is only required in the proofs of (b) and (c). In general, the loci (\ref{U}) and (\ref{Uone}) are contained in $\TT_E B^r_{r,d}$. \end{remark}

\subsection{Secant varieties} In \cite[{\S} 5]{CT} it is shown that the tangent cones to certain generalised theta divisors contain secant varieties of the curve $X$. Here we deduce a similar statement for $\TT_E B^k_{r,d}$ using Theorem \ref{genRKS}.

\begin{theorem} \label{secant} Suppose $E$ is a generically generated stable bundle of degree $d < rg$ with $h^0 (X, E) = r+n$ for some $n \ge 1$. Then $\TT_E B^r_{r,d}$ contains $\Sec^n \Delta$. \end{theorem}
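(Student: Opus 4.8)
The plan is to deduce this from Theorem \ref{genRKS}(b): recall from the remark following it that, \emph{even without} the hypothesis of Petri $r$-injectivity, the locus $\bigcup_{\Lambda \in U} \Span ( Z_\Lambda \times_X \PP E )$ of (\ref{U}) is contained in $\TT_E B^r_{r,d}$. Since $\Sec^n \Delta$ and $\TT_E B^r_{r,d}$ are both closed, and $\Sec^n \Delta$ is by definition the closure of the union of the spans $\Span ( \psi(p_1), \ldots, \psi(p_n) )$ over $n$-tuples $(p_1, \ldots, p_n) \in \Delta^n$, it is enough to prove that $\Span ( \psi(p_1), \ldots, \psi(p_n) ) \subseteq \TT_E B^r_{r,d}$ for all $(p_1, \ldots, p_n)$ in a dense subset of $\Delta^n$.

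I would first isolate the one geometric fact about $\Delta = \PP E^* \times_X \PP E$ that is needed. Write $\tomega \colon \Delta \to \PP E^*$ for the projection. It is flat, so for any $Z \in \Hilb^d(\PP E^*)$ and any $\nu \in \Supp Z$ the reduced fibre $\tomega^{-1}(\nu) = \PP ( \nu \otimes E|_{\pi(\nu)} )$ is a closed subscheme of $\tomega^{-1}(Z) = Z \times_X \PP E$; by (\ref{reldefspan}) this yields $\Span ( \tomega^{-1}(\nu) ) \subseteq \Span ( Z \times_X \PP E )$, and hence, whenever $\psi$ is defined at a point $p$ of the fibre over such a $\nu$, one has $\psi(p) \in \Span ( \tomega^{-1}(\nu) ) \subseteq \Span ( Z \times_X \PP E )$.

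The heart of the argument is the claim: \emph{for general $\mu_1, \ldots, \mu_n \in \PP E^*$ lying over distinct points $y_1, \ldots, y_n \in X$ at which $E$ is generated, there exists $\Lambda \in U$ with $\{ \mu_1, \ldots, \mu_n \} \subseteq \Supp Z_\Lambda$.} Granting it: for general $p_j \in \tomega^{-1}(\mu_j)$ we get $\psi(p_j) \in \Span ( Z_\Lambda \times_X \PP E ) \subseteq \TT_E B^r_{r,d}$ by the preceding two paragraphs, so $\Span ( \psi(p_1), \ldots, \psi(p_n) ) \subseteq \TT_E B^r_{r,d}$; as the $\mu_j$ and $p_j$ vary these spans sweep out a dense subset of $\Sec^n \Delta$, and we are done. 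To prove the claim I would construct $\Lambda$ explicitly. Identify $\mu_j$ with the hyperplane $H_j = \mu_j^\perp \subset E|_{y_j}$, set $\phi_j := \mu_j \circ \ev_{y_j} \colon H^0(X, E) \to \K$, and put $\Lambda := \bigcap_{j=1}^n \Ker \phi_j$. For a general configuration the $\phi_j$ are linearly independent, so $\dim \Lambda = h^0(X,E) - n = r$. Since $\phi_j$ vanishes on $H^0(X, E(-y_j))$, which has dimension $n$ (as $E$ is generated at $y_j$), a dimension count shows that $\Ker ( \ev_{y_j}|_\Lambda ) = \bigl( \bigcap_{l \ne j} \Ker \phi_l \bigr) \cap H^0(X, E(-y_j))$ has dimension $n - (n-1) = 1$, so $\ev_{y_j}|_\Lambda \colon \Lambda \to E|_{y_j}$ has rank exactly $r-1$ with image $H_j$. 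The fibre $E|_{y_j}/\Image ( \ev_{y_j}|_\Lambda )$ of $\tau_\Lambda$ at $y_j$ being then one-dimensional, $\tau_\Lambda$ is cyclic there, so by Remark \ref{loun} and Corollary \ref{descrVtildedual} the subscheme $Z_\Lambda$ meets the fibre over $y_j$ in exactly the point $H_j^\perp = \mu_j$. It remains to check $\Lambda \in U$, i.e. that $\ev_\Lambda \colon \Ox \otimes \Lambda \to E$ is generically injective. For this I would observe that $(\mu_1, \ldots, \mu_n) \mapsto \bigcap_j \Ker \phi_j$ defines a rational map $\Phi \colon (\PP E^*)^n \dashrightarrow \Gr ( r, H^0(X,E) )$ between varieties of the same dimension $rn$, and that its image contains the dense open locus $U_1$: given a general $\Lambda_0 \in U_1$ with $Z_{\Lambda_0} = \{ \nu_1, \ldots, \nu_d \}$ reduced (and with $d \ge n$ since $d \ge 2n$ by the Clifford bound for semistable bundles together with $d < rg$), one has $\Lambda_0 = \bigcap_{j=1}^n \Ker ( \nu_j \circ \ev_{\pi(\nu_j)} )$, exhibiting $\Lambda_0$ in the image of $\Phi$. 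Hence $\Phi$ is dominant, and a general configuration is sent into the dense open $U \supseteq U_1$.

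The step I expect to be the main obstacle is exactly this last one: because $\Lambda = \bigcap_j \Ker \phi_j$ is forced rather than freely chosen, one must rule out that it is everywhere degenerate as a generating subspace of $E$ — that is, one must establish the dominance of $\Phi$. This is where the hypotheses are used: generic generation of $E$ makes $U$ (and, when for instance $E$ is globally generated, also $U_1$) nonempty and dense, while stability together with $d < rg$ forces $d \ge 2n$ so that $Z_{\Lambda_0}$ has enough points; if $U_1$ happens to be empty one runs the same argument with $U$ in place of $U_1$, using Bertini on the sub-linear-system of $|\det E|$ spanned by the sections $\det ( \ev_\Lambda )$ to see that $Z_\Lambda$ is reduced for $\Lambda$ in a dense subset of $U$. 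In the rank-one case this obstacle is invisible: there $\Lambda = H^0 ( X, L(-\mu_1 - \cdots - \mu_n) )$ is an admissible choice as soon as it is nonzero, which $h^0(X,L) = 1 + n$ forces.
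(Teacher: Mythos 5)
Your proposal is correct and follows essentially the same route as the paper: pick $n$ general points of $\Delta$ lying over distinct points of $X$, cut out $\Lambda$ as the intersection of the kernels of the functionals $\nu_j \circ \ev_{y_j}$ (codimension $\le n$ in the $(r+n)$-dimensional $H^0(X,E)$, so dimension $r$ generically), note that $Z_\Lambda$ then contains the chosen points so that the secant lies in $\Span ( Z_\Lambda \times_X \PP E ) \subseteq \TT_E B^r_{r,d}$, and finish by density and closedness of the tangent cone. The only divergence is your extra care in checking $\Lambda \in U$, i.e.\ that $\ev_\Lambda$ is generically injective --- a point the paper passes over silently --- and your dominance argument for $\Phi$ settles it cleanly when $U_1 \neq \emptyset$ (the fallback sketch for $U_1 = \emptyset$ is the only place where your write-up is thinner than the rest, since expressing a $\Lambda_0$ with nonreduced $Z_{\Lambda_0}$ as an intersection of $n$ such kernels of \emph{reduced} points needs more justification).
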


\begin{proof} Let $\nu_1 \otimes e_1 , \ldots , \nu_n \otimes e_n$ be a collection of $n$ points of $\Delta \subset \PP (E^* \otimes E)$ lying over distinct $x_1 , \ldots , x_n \in X$ respectively. Since $E$ is generically generated, the condition that a section $s$ satisfy $s(x_i) \in \Ker (\nu_i)$ for $1 \le i \le n$ determines a linear subspace of codimension at most $n$ in $H^0 (X, E)$. Thus there exists $\Lambda \in \Gr$ such that $Z_\Lambda$ contains the points $\nu_1 , \ldots , \nu_n$. (If the $\nu_i$ are general enough, $\Lambda$ is unique.) By Theorem \ref{genRKS} (b), the tangent cone contains
\[ \Span \left( Z \times_X \PP E \right) \ = \ \Span \left( \bigcup_{i=1}^n \PP ( \nu_i \otimes E|_{x_i} ) \right) . \]
hence in particular the secant spanned by $\nu_1 \otimes e_1 , \ldots , \nu_n \otimes e_n$. Since the $\nu_i$ were chosen generally, $\TT_E B^r_{r, d}$ contains a dense subset of $\Sec^n ( \Delta )$, and hence all of $\Sec^n ( \Delta )$ since $\TT_E B^r_{r,d}$ is closed. \end{proof}

\subsection{Existence of good singular points} It is nontrivial to establish that there exist bundles $E$ satisfying the hypotheses of Theorem \ref{genRKS}. For $k = r+1$, sufficient conditions on $d$ and $g$ are given in \cite[Proposition 6.6]{BBPN}. 

If we relax the condition of Petri $r$-injectivity and only ask that $E$ be stable and generically generated, then Theorem \ref{secant} is still valid. Such $E$ can be constructed using a method in \cite{Mer}, which we recall for the reader's convenience. Let $L_1 , \ldots , L_r$ be mutually nonisomorphic line bundles of degree $e < d/r$. Consider elementary transformations
\begin{equation} 0 \ \to \ L_1 \oplus \cdots \oplus L_r \ \to \ E \ \to \ \tau \ \to \ 0 \label{E} \end{equation}
where $\tau$ is a torsion sheaf of degree $f \ge 1$. By \cite[Th\'eor\`eme A-5]{Mer}, a general such $E$ is stable for any choice of $L_1, \ldots , L_r$.

We require that $E$ also be generically generated and satisfy $h^0 ( X, E ) = r + n$ for some $n \ge 1$. As above, we assume $d < rg$. Taking $L_i$ to be effective of degree $e < d/r$ and $\tau$ of degree $f = d - re$, we obtain a generically generated stable $E$ of degree $d$. If $h^0 (X, L_i) \ge 2$ for at least one $i$, then $E$ is a singular point of $B^r_{r,d}$ and satisfies the hypotheses of Theorem \ref{secant}. Note that in this case $d \ge r \cdot \mathrm{gon}(X) + 1$.

\begin{remark} In fact, it is possible to show the following. Let $X$ be a general curve of genus $g$. Suppose the following are satisfied:
\begin{enumerate}
\item[(i)] $1 \le e \le g-1$
\item[(ii)] $g - m(m - e + g - 1 ) \ge 1$
\item[(iii)] $d := re + f < rg$
\end{enumerate}
Then for $r \le k \le rm$, there exist stable, generically generated $E$ of rank $r$ and degree $d$ with $h^0 ( X, E ) = k$ and $\mu \colon H^0 ( X, E ) \otimes H^0 ( X, \Kx \otimes E^* ) \to H^0 ( X, \Kx \otimes \End E )$ injective.

\begin{proof}[Idea of proof] In view of (ii), by the Brill--Noether theory of line bundles, we may choose mutually nonisomorphic $L_1 , \ldots , L_r$ of degree $e$ with $h^0 (X, L_i) = m$. Using (i) and (iii), the construction (\ref{E}) yields a stable, generically generated $E$ of degree $d$ with $h^0 ( X, E ) = rm$. Then an argument similar to that in \cite[Th\'eor\`eme 1.2]{Hir2}, using the generality of $X$, shows that a general such $E$ is in fact Petri injective. The theory of $\chi$-like stratifications \cite{Hir1} then shows that $E$ can be deformed to a stable, generically generated, Petri injective $E'$ with $h^0 ( X, E ) = k$ for $r \le k \le rm - 1$. \end{proof}

\noindent As a more general statement will be proven in a forthcoming paper, we omit the details. \end{remark}

\bibliography{grr_tangentconeBN}{}

\bibliographystyle{alpha}

\end{document}